\DeclareFontFamily{U}{mathc}{}
\DeclareFontShape{U}{mathc}{m}{it}%
{<->s*[1.03] mathc10}{}
\DeclareMathAlphabet{\mathscr}{U}{mathc}{m}{it}
\begin{document}

\newtheorem{theorem}{Theorem}
\newtheorem*{theorem*}{Theorem}
\newtheorem{conjecture}[theorem]{Conjecture}
\newtheorem{proposition}[theorem]{Proposition}
\newtheorem{question}[theorem]{Question}
\newtheorem{lemma}[theorem]{Lemma}
\newtheorem{definition}[theorem]{Definition}
\newtheorem*{definition*}{Definition}
\newtheorem{cor}[theorem]{Corollary}
\newtheorem*{cor*}{Corollary}
\newtheorem*{result*}{Result}
\newtheorem{obs}[theorem]{Observation}
\newtheorem{proc}[theorem]{Procedure}
\newcommand{\comments}[1]{} 
\def\Z{\mathbb Z}
\def\Za{\mathbb Z^\ast}
\def\Fq{{\mathbb F}_q}
\def\R{\mathbb R}
\def\N{\mathbb N}
\def\i{\sqrt{-1}}
\def\k{\kappa}

\title{On the total curvature of confined equilateral quadrilaterals}

 \author[University of Michigan]{Gabriel Khan}

\email{gabekhan@umich.edu}

\date{\today}

\maketitle 

\begin{abstract}
In this paper, we prove that the total expected curvature for random spatial equilateral quadrilaterals with diameter at most $r$ decreases as $r$ increases. To do so, we prove several curvature monotonicity inequalities and stochastic ordering lemmas in terms the of the action-angle coordinates. Using these, we can use Baddeley's extension of Crofton's differential equation to show that the derivative of the expected total curvature is non-positive.
\end{abstract}



\section{Introduction}

Random spatial polygons have been studied extensively from many different viewpoints. The original motivation for this problem is that spatial polygons are a simple model for the folding of polymers. As such, it is of considerable interest to understand the statistical properties of the geometry and topology of such objects.

The moduli space of spatial polygons with given edge lengths is a fascinating object in its own right. When $n =4$ or a generic assumption on the edge lengths is made, this moduli space is a smooth K\"ahler manifold of dimension $2n - 6$ \cite{KM}. There are still many open questions about this space, which remains an active subject of research. It is also possible to use the symplectic structure of the moduli space to establish theorems about the geometry of random walks and polygons \cite{CS}.

In this work, we study confined random polygons, using the diameter of a polygon as a measure of its confinement. There are other possible measures as well, such as the confinement radius or the gyration radius of the polygon \cite{KCM}. Imposing confinement on a random polygon affects the geometry in subtle ways and is a topic of active research. 
In this work we study the curvature of confined random polygons, which was previously studied by Diao et al. \cite{DEMZ}. We define $M_{n,r}$ to be the moduli space of equilateral spatial polygons with diameter at most $r$ and $ \bar \kappa_{n,r}$  the expected total curvature of polygons in $M_{n,r}$. More precisely,  
\[ \bar \kappa_{n,r} = \frac{1 }{\mu(M_{n,r})}  \int_{P_n \in M_{n,r}} \sum_{i=1}^n \angle(e_i,e_{i+1}) \, d \mu,\]
where $d \mu$ the symplectic volume form.

It has been observed that confinement tends to increase the expected total curvature. Numerical simulation bears this out and heuristically, a polygon must turn on itself repeatedly in order to remain confined. However, proving this rigorously for general polygons seems to be a difficult problem. The main contribution of our work is to prove this conjecture for equilateral random quadrilaterals.

\begin{theorem}
$\bar \kappa_{4,r}$ is non-increasing as $r$ increases.
\end{theorem}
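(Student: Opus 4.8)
The plan is to pass to action--angle coordinates on $M_4$, write both the symplectic measure and the total curvature explicitly in them, apply Baddeley's differential identity to reduce the statement to a one--parameter stochastic ordering, and then establish that ordering by an explicit computation after a trigonometric substitution. \emph{Setup.} Bending along a diagonal equips $M_4$ with action--angle coordinates $(d,\vartheta)$, where $d\in[0,2]$ is the length of the diagonal $v_2v_4$ and $\vartheta\in[0,2\pi)$ is the bending angle, and in which the symplectic measure is a constant multiple of $\mathrm d d\,\mathrm d\vartheta$. Writing $d_1=d$ and $d_2=|v_1v_3|$ for the two diagonals, one has $d_2=\sqrt{4-d_1^2}\,|\sin(\vartheta/2)|$, and a change of variables shows that the symplectic measure has density proportional to $(4-d_1^2-d_2^2)^{-1/2}$ on the quarter disk $\Omega=\{d_1,d_2\ge 0,\ d_1^2+d_2^2<4\}$; this is symmetric in $d_1\leftrightarrow d_2$, as it must be since a cyclic relabelling of the vertices interchanges the diagonals and preserves the symplectic volume. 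Splitting the quadrilateral along each diagonal into two unit isosceles triangles and using the law of cosines, the interior angles at $v_1,v_3$ are each $2\arcsin(d_1/2)$ and those at $v_2,v_4$ are each $2\arcsin(d_2/2)$, so
\[
\kappa=4\pi-4\arcsin(d_1/2)-4\arcsin(d_2/2)=:4\pi-4A(d_1,d_2),
\]
which is strictly increasing in each diagonal. Since the diameter of an equilateral quadrilateral is $\max(1,d_1,d_2)$, for $r\ge 1$ (below which $M_{4,r}=\varnothing$) the space $M_{4,r}$ is precisely $\Omega_r:=\Omega\cap\{d_1\le r,\ d_2\le r\}$, and $\bar\kappa_{4,r}=4\pi-4H(r)$ where $H(r)$ is the average of $A$ over $\Omega_r$. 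Hence it suffices to show $H$ is non-decreasing; for $r\ge 2$ one has $\Omega_r=\Omega$ and $H$ is constant, so the content lies in $1\le r<2$.

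\emph{Reduction via Baddeley's formula.} Let $D=\max(d_1,d_2)$, so $\Omega_r=\{D\le r\}$ and the moving part of $\partial\Omega_r$ is the level set $\{D=r\}$. Baddeley's extension of Crofton's differential equation --- here a form of the coarea identity for the level sets of $D$, whose gradient has norm $1$ --- shows that the $r$-derivatives of $\int_{\Omega_r}A\,\mathrm d\mu$ and of $\mu(\Omega_r)$ are the density-weighted integrals of $A$ and of $1$ over $\{D=r\}$, and that their ratio is exactly $\psi(r):=\mathbb E[A\mid D=r]$. Consequently
\[
H'(r)=\frac{\mu(\Omega_r)'}{\mu(\Omega_r)}\bigl(\psi(r)-H(r)\bigr),
\]
and since $H(r)=\mathbb E[A\mid D\le r]$ is an average of $\psi(D)$ over $\{D\le r\}$, the whole theorem reduces to the stochastic-ordering statement that $\psi$ is non-decreasing on $[0,2)$.

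\emph{Monotonicity of $\psi$.} Conditioned on $D=t$, one diagonal equals $t$ and the other, $z$, has density proportional to $(4-t^2-z^2)^{-1/2}$ on $[0,z_{\max}(t)]$ with $z_{\max}(t)=\min(t,\sqrt{4-t^2})$, so $\psi(t)=\arcsin(t/2)+\mathbb E[\arcsin(z/2)]$ for this law of $z$. The substitution $z=\sqrt{4-t^2}\sin\beta$ gives
\[
\psi(t)=\arcsin(t/2)+\frac{1}{\beta_t}\int_0^{\beta_t}\arcsin\!\Bigl(\tfrac{\sqrt{4-t^2}}{2}\sin\beta\Bigr)\mathrm d\beta,\qquad \beta_t=\arcsin\!\Bigl(\tfrac{z_{\max}(t)}{\sqrt{4-t^2}}\Bigr).
\]
For $t\le\sqrt2$ --- where $\beta_t<\tfrac{\pi}{2}$ and $\Omega_r$ is a square --- differentiating, using that $\arcsin(t/2)$ is the largest value of the integrand, and bounding $(1-c^2\sin^2\beta)^{-1/2}\le(1-c^2)^{-1/2}$ with $c^2=(4-t^2)/4$, one obtains $\psi'(t)\ge\frac{1}{\sqrt{4-t^2}}\bigl(1-\frac{1-\cos\beta_t}{\beta_t}\bigr)\ge 0$ by $1-\cos x\le x$. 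For $t\ge\sqrt2$ --- where $\beta_t=\tfrac{\pi}{2}$ and the endpoint singularity of the weight is absorbed by the substitution --- one uses $\int_0^{\pi/2}\sin\beta\,(1-c^2\sin^2\beta)^{-1/2}\,\mathrm d\beta=c^{-1}\operatorname{arctanh}c$ to get, with $t=2\sin\gamma$,
\[
\psi'(t)=\frac{1}{2\cos\gamma}\Bigl(1-\tfrac{2}{\pi}\tan\gamma\operatorname{arctanh}(\cos\gamma)\Bigr),
\]
and since $\operatorname{arctanh}(\cos\gamma)=\log\cot(\gamma/2)$ the inequality $\log x\le x-1$ gives $\tan\gamma\operatorname{arctanh}(\cos\gamma)\le\frac{2}{1+\tan(\gamma/2)}\le\sqrt2<\tfrac{\pi}{2}$ on $[\tfrac{\pi}{4},\tfrac{\pi}{2})$, so again $\psi'\ge 0$. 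As $\psi$ is continuous at $t=\sqrt2$, it is non-decreasing on $[0,2)$, which with the previous paragraph finishes the proof.

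\emph{Main obstacle.} The delicate point is the monotonicity of $\psi$. Because $\Omega_r$ is a square only for $r\le\sqrt2$ and develops a curved corner for $r>\sqrt2$, the conditional law of the free diagonal given $D=t$ changes character at $t=\sqrt2$ and its density picks up an integrable endpoint singularity; one must therefore run a two-regime argument and verify that the pieces match continuously. Each regime ultimately rests on a clean elementary inequality ($1-\cos x\le x$ in one, $\tan\gamma\log\cot(\gamma/2)\le\pi/2$ in the other), but choosing the substitution so that this is all that remains --- and keeping careful track of the boundary term coming from $z_{\max}(t)$ and of the sign of the leftover term in the differentiation --- is where the work lies, and is presumably the content of the curvature monotonicity inequalities and stochastic ordering lemmas advertised in the abstract.
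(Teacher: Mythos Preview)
Your argument is correct, and it takes a genuinely different route from the paper.

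\textbf{What the paper does.} The paper stays in the action--angle coordinates $(\ell,\theta)$ throughout. The confinement boundary then has two pieces, $\partial^\ell M_{4,r}^+$ and $\partial^\theta M_{4,r}^+$, on each of which the paper defines a ``boundary'' measure $\mu_B$ and a ``marginal'' measure $\mu_I$. It proves two separate stochastic-ordering lemmas ($\mu_I\preceq\mu_B$ on each piece) and two separate curvature-monotonicity lemmas ($\kappa$ decreasing in $\theta$ and in $\ell$), then chains these through Crofton's equation as $\kappa_{4,r,\mu_B}\le\kappa_{4,r,\mu_I}\le\bar\kappa_{4,r}$. The case $r>\sqrt2$ is handled by passing to the subdomain $M_{4,r}^{+,\ell}$ where the boundary again becomes a single curve.

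\textbf{What you do differently.} You change variables to the pair of diagonal lengths $(d_1,d_2)$, where the symplectic measure has the symmetric density $(4-d_1^2-d_2^2)^{-1/2}$, the confinement region is $\{d_1\le r,\ d_2\le r\}$, and the total curvature is simply $4\pi-4\arcsin(d_1/2)-4\arcsin(d_2/2)$. This collapses the paper's two curvature-monotonicity lemmas into a triviality (each $\arcsin$ is increasing), and replaces the pair of stochastic-ordering lemmas by the single explicit computation that $\psi(t)=\mathbb E[A\mid\max(d_1,d_2)=t]$ is non-decreasing. The two regimes $t\le\sqrt2$ and $t\ge\sqrt2$ correspond exactly to the paper's two cases (square vs.\ clipped square), and your trig substitution $z=\sqrt{4-t^2}\sin\beta$ absorbs the endpoint singularity cleanly. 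I checked both of your inequalities: for $t\le\sqrt2$ the bound $(1-c^2\sin^2\beta)^{-1/2}\le(1-c^2)^{-1/2}=2/t$ together with $|c'|=t/(2\sqrt{4-t^2})$ gives exactly $\psi'(t)\ge(4-t^2)^{-1/2}\bigl(1-\tfrac{1-\cos\beta_t}{\beta_t}\bigr)$; for $t\ge\sqrt2$ the identity $\tan\gamma\,(\cot(\gamma/2)-1)=2/(1+\tan(\gamma/2))$ and its value $\sqrt2$ at $\gamma=\pi/4$ do the job.

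\textbf{What each approach buys.} Your diagonal--diagonal coordinates make the $d_1\leftrightarrow d_2$ symmetry manifest and turn the paper's delicate Lemma~6 (monotonicity in $\ell$, where the paper has to wrestle with the degeneracy at $\theta=\pi$) into an immediate consequence of the explicit formula. The paper's approach, by contrast, isolates reusable structural statements (stochastic dominance of boundary over marginal measures, monotonicity of $\kappa$ in each action--angle coordinate) that one might hope to generalize to $n>4$; your approach is more direct but leans on computations specific to the quadrilateral.
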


This result gives some insight, albeit indirectly, into the probability that a random problem is knotted. This is a question of considerable interest, as the knottedness of a polymer can directly affect its physical properties. Heuristically, we expect that confinement forces the polygon to tangle and so confined polygons are more likely to be knotted than unconfined ones. This has been demonstrated numerically \cite{MMOS}, but it is very difficult to explicitly compute knotting probabilities.

 Polygons with curvature less than $4 \pi$ are unknots via the Fary-Milnor theorem \cite{JM}, so expected total curvature can be used as a very rough proxy for knottedness. As all quadrilaterals and pentagons are unknotted, our main theorem does not give direct insight into the knottedness phenomena. However, we also show that when the number of sides $n$ is even, the probability of a random polygon being knotted is decreasing when the confinement diameter is close to $n/2$.

\subsection{Acknowledgments}

Thanks to Clayton Shonkwiler for informing the author of this problem and for some helpful discussions. Thanks also to Alex Wright for providing some flexible polygons made from straw and string which were very helpful for experiments. He also provided some very helpful lectures and notes \cite{AW} on the moduli space of spatial polygons. This work was partially supported by DARPA/ARO Grant W911NF-16-1-0383 (Information Geometry: Geometrization of Science of Information, PI: Zhang).

\section{Notation and Conventions} \label{Notation}

 To define the moduli space of equilateral spatial polygons, we consider $\mathbb{S}_1^2 \subset \mathbb{R}^3$ the unit sphere and consider the map $\mathcal{D}$:

\begin{eqnarray*} \mathcal{D}: & \underbrace{\mathbb{S}^2_1 \times \ldots \times  \mathbb{S}^2_1}_{n \textrm{-times}} &  \to \mathbb{R}^3 \\
 & (e_1, \ldots , e_n) & \mapsto \sum_{i=1}^n e_i 
\end{eqnarray*}

The moduli space of spatial equilateral polygons $M_n$ is defined to be $\mathcal{D}^{-1}(0) / SO(3)$. Colloquially, $\mathcal{D}^{-1}(0)$ are the collection of edges that sum to 0 (i.e form a closed polygon), from which we quotient out the action of isometries.  In the language of \cite{AW}, $M_n$ is equivalent to $ \mathcal{M} ( \underbrace{1,\ldots,1}_{n \textrm{-times}} )$ but we will focus on equilateral polygons and so suppress the repeated ones. It can be shown that $M_n$ is \textit{not} a manifold in general ($0$ is not a regular value for $\mathcal{D}$ when $n$ is even), but is a manifold when $n$ is odd or $n$ is equal to 4.

\begin{center}
\begin{tikzpicture}
\begin{axis}[
  view={35}{15},
  axis lines=center,
  width=15cm,height=10cm,
  xtick={0,1,2,3},ytick={-1,0,1},ztick={0,1,2},
  xmin=0,xmax=3.2,ymin=-.7,ymax=2.3,zmin=0,zmax=2,
]


\addplot3 [only marks] coordinates { (0+.2,0+.2,0+.2) (1.414+.2,1.414*.382+.2,1.414*.924+.2) (2.828+.2,0+.2,0+.2) (1.414+.2,-1.414*.382+.2,1.414*.924+.2) };

\addplot3 [thick, black!60!green] coordinates { (0+.2,0+.2,0+.2) (1.414+.2,1.414*.382+.2,1.414*.924+.2) (2.828+.2,0+.2,0+.2) (1.414+.2,-1.414*.382+.2,1.414*.924+.2) (0+.2,0+.2,0+.2) };
\addplot3 [dashed] coordinates { (0+.2,0+.2,0+.2) (2.828+.2,0+.2,0+.2) };

\node [above left] at (axis cs:.2,.2,.2) {$v_1$};
\node [above right] at (axis cs:1.614,.74015,1.5065) {$v_4$};
\node [above right] at (axis cs:3.028,.2,.2) {$v_3$};
\node [above right] at (axis cs:1.614,-0.340,1.5065) {$v_2$};
\node [above] at (axis cs:1.614,.2,.2) {$\ell_3$};
\end{axis}
\end{tikzpicture}

Figure 0: An equilateral spatial quadrilateral with $\ell_3$ labeled.
\end{center}

Given an equilateral spatial polygon $P_n \in M_n$, we can consider its vertices $ \{ v_i \}_{i=1}^n$ and edges $ \{ e_i \}_{i=1}^n$ with the convention that $e_i$ connects $v_i$ and $v_{i+1}$. From this, we induce action-angle coordinates $\{ (\ell_i, \theta_i) \}_{i=3}^{n-1}$ on $M_n$, where, at a point $P_n$, $\ell_i = d(v_i,v_1)$ and the angle coordinate is given by the angle of rotation about the line $\overline{v_1 v_i}$. For quadrilaterals, $\theta_3$ is the angle between the triangles $\triangle(v_1 v_2 v_3)$ and $\triangle(v_1 v_4 v_3)$. The action-angle values form coordinates on all but a set of positive codimension on $M^n$ and induce $M_n$ with a natural symplectic structure with invariant volume form $d \mu = \prod_{i=3}^{n-1} d \ell_i d \theta_i$ \cite{KM}. For most of this paper, we will not use of the symplectic structure, but do use its associated volume form.

In order to consider confined polygons, we denote $M_{n,r}$ to be the moduli space of equilateral polygons with diameter at most $r$. More precisely,
\[M_{n,r} := \left \{ P_n \in M_n ~|~ \| \sum_{i=j}^k e_i \| < r \textrm{ for all }1 \leq  j,k \leq n \right \}. \]

We then define $ \bar \kappa_{n,r}$  the total expected curvature of polygons in $M_{n,r}$:
\[ \bar \kappa_{n,r} = \frac{1 }{\mu(M_{n,r})}  \int_{P_n \in M_{(n,r)}} \sum_{i=1}^n \angle(e_i,e_{i+1}) d \mu. \]

\subsection{ Notation for quadrilaterals}

We now specialize to quadrilaterals. To aid the intuition, it might be worthwhile to observe that $M_4$ is a topologically Riemann sphere but its intrinsic metric may not be round.  Although we will not need to use this fact explicitly, it helps inform the geometric intuition.

 To better understand $M_4$, we introduce a slight change of coordinates that are useful for computation. Given $P_4 \in M_4$, we can perform an isometry on $P_4$ so that 
\begin{eqnarray*}
 v_1=(0,0,0) & v_2 = (\cos \phi, \sin \phi, 0 ) \\
  v_3 = (2 \cos \phi,0, 0 ) & v_4 = (\cos \phi, \sin \phi \cos \theta,\sin \phi \sin \theta).
\end{eqnarray*}
 
We treat $\phi$ and $\theta$ as coordinates, in which case we have $\ell_3= 2 \cos \phi$ and $\theta_3 = \theta$. Since there is a single $\ell_3$ and $\theta_3$ coordinate in this case, we will drop the subscripts, denoting them as $\ell$ and $\theta$, respectively.

 Note that there are symmetries of an equilateral polygon induced by relabeling the vertices, which correspond to distinct points in $M_4$. Using these symmetries will help simplify the calculations. 
For instance, we denote $M_4^+$ to be the subset of $M_4$ where $0 \leq \theta \leq \pi$. Similarly, we denote  $M_{4,r}^+$ to be the subset of  $M_{4,r}$ where $0 \leq \theta \leq \pi$. Any quadrilateral whose vertices are in the above form is either in $M_4^+$ or can be reflected to a polygon in $M_4^+$. As such, polygons in $M_{4,r}$ and $M_{4,r}^+$ have the same expected curvature, so it suffices to work solely in terms of $M_{4,r}^+$.

\section{The geometry of $M_{4,r}^+$ and its boundary sets}

We now define some natural boundary sets of $M_{4,r}$. We define $\partial_\epsilon  M_{4,r}^+$ to be the set  $ M_{4,r+\epsilon}^+ \backslash M_{4,r}^+$. Letting $\epsilon$ go to zero, we define $\partial  M_{4,r}^+$ as the set-theoretic limit 
\[ \partial  M_{4,r}^+ = \lim_{\epsilon \to 0} \partial_\epsilon  M_{4,r}^+. \]

More explicitly, $\partial M_{4,r}^+$ consists of two separate parts; the set where $\ell=r$ and $d(v_2,v_4)\leq r$ and the set where $d(v_2,v_4)=r$ and  $\ell \leq r$. We denote the former part by  $\partial^\ell M_{4,r}^+{}$ and the latter by  $\partial^\theta M_{4,r}^+{}$.  Note that $\partial M_{4,r}^+$ is not the boundary of $M_{4,r}^+$ in the traditional sense, as it does not include the parts where $\ell=0$, $\theta=0$, or $\theta= \pi$. In Figure 1, $M_{4,r}^+$ is the shaded region in $(\theta, \ell)$-coordinates. The top and right parts of the boundary is $\partial M_{4,r}^+$.

\begin{center}
\includegraphics[width=90mm,scale=0.5]{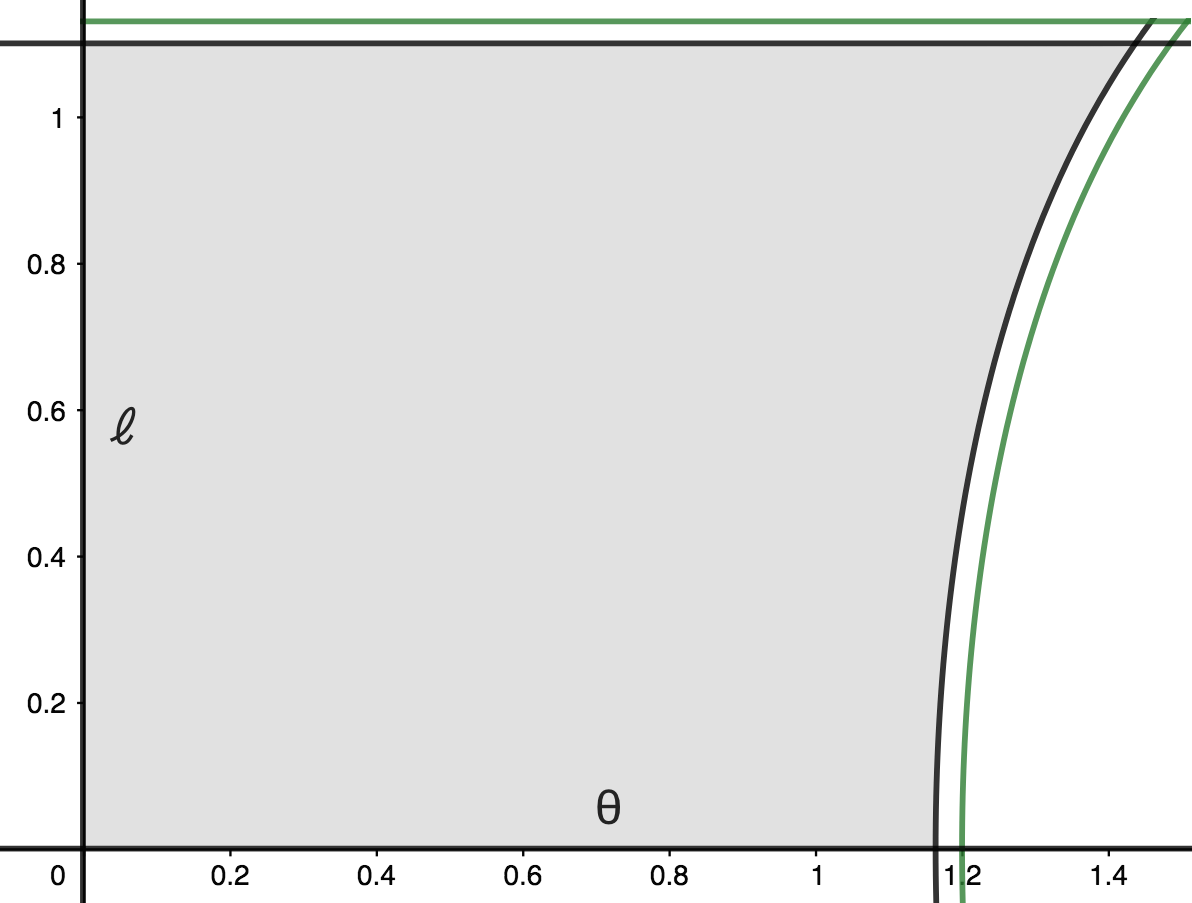}

Figure 1:  $M_{4,r}^+$ and $\partial M_{4,r}^+$ for $r=1.1$
\end{center}

We can define two projections from $M_{4,r}^+$ onto $ \partial  M_{n,r}^+ $. The first of these, $\pi_\ell$ projects $M_{4,r}^+$ to $ \partial ^\ell M_{n,r}^+ $ by fixing the $\theta$ coordinate. The second, $\pi_\theta$ projects $M_{4,r}^+$ to $ \partial ^\theta M_{n,r}^+ $ in a way that fixes the $\ell$ coordinate. More precisely, for a given $P_4(\ell, \theta) \in M_{4,r}^+$,
\begin{eqnarray*} \pi_\ell \left( P_4(\ell, \theta) \right) &=&  P_4(r,\theta) \\
 \pi_\theta \left( P_4( \ell,\theta) \right) &=& P_4 \left( \ell, \arccos \left( 1 - \frac{r^2}{2-\frac{\ell^2}{2}} \right) \right).
\end{eqnarray*}

 Using the uniform measure $\mu$ on $M_{4,r}^+$, we induce $\partial M_{4,r}^+$ with two measures $\mu_B$ and $\mu_I$. Heuristically, $\mu_B$ is the natural boundary measure, whereas $\mu_I$ is the marginal probability measure induced from the uniform measure on $ M_{4,r}^+$. To define $\mu_B$, we consider $ \mu_\epsilon$ the uniform measure on $\partial_\epsilon M_{4,r}^+$, normalized so that its total measure is 1. We define $ \mu_B$ to be the limit of these measures on $\partial  M_{4,r}^+$. This turns out to be a uniform measure on $\partial^\ell M_{4,r}^+$ , but is \textit{not} the uniform measure on $\partial^\theta M_{4,r}^+$. In Figure 1, the distance of the green curve from the black corresponds to the density $d \mu_B$.

Intuitively, we define $\mu_I$ as the marginal distributions of $\ell$ and $\theta$ with respect to the normalized uniform measure on $ M_{4,r}$. For purposes that will later become clear, we want the measure $\mu_I(\partial^\theta M_{4,r}^+)$ to be the same as $\mu_B(\partial^\theta M_{4,r}^+)$. To ensure this, we set $\alpha = \mu_B( \partial^\ell M_{4,r}^+{})$ and  $(1-\alpha) = \mu_B( \partial^\theta M_{4,r}^+{})$. Using symmetry, it is possible to show that $\alpha = \frac{1}{2}$, but we will not prove this here.

 For $U \subset \partial^\ell M_{4,r}^+{}$, we define
 \[ \mu_I(U) = \frac{ \alpha}{ \mu(M_{4,r}^+)}  \mu \left( \left\{ P_4(\ell, \theta) ~|~ \ell<r~ (r, \theta) \in U \right\} \right). \]

Similarly, for $U \subset  \partial^\theta M_{4,r}^+{}$ and $r<\sqrt{2}$, we define
 \[\mu_I(U) =\frac{ 1- \alpha}{ \mu(M_{4,r}^+)} \mu \left( \left\{ P_4(\ell, \theta) \in M_{4,r}^+ ~|~  \exists \, \theta^\prime \textrm{ such that } (\ell, \theta^\prime ) \in U \right\} \right). \] 

When $r > \sqrt{2}$, $\mu_I$ is not a probability measure on $\partial M_{4,r}^+$. The reason for this is that for a polygon $P_4(\ell,\theta)$ with $\sqrt{2} < \ell < r$, $d(v_2, v_4) \leq \sqrt{2}$ independent of $\theta$. To avoid this issue, we restrict our attention to the range $r<\sqrt{2}$. We will return to the case for larger $r$ later.

\subsection{Semi-explicit calculations of $\mu_B$ and $\mu_I$}
It is necessary to calculate these measures more explicitly to understand their properties. To do so, note that
\begin{eqnarray*}
d(v_2,v_4)^2 & = & \left( \sin \phi - \sin \phi \cos \theta \right)^2 +  - \sin^2 \phi \sin^2 \theta \\
& =&  \sin^2 \phi (2 - 2 \cos \theta).
\end{eqnarray*}
In $(\ell, \theta)$ coordinates, this is given by the expression
 \[ d(v_2,v_4)^2 = \left( 1-\frac{\ell^2}{4} \right) (2-2 \cos \theta). \]

For $ 1< r< \sqrt{2}$, this allows us to write out the boundary sets explicitly.

  \[  \partial^\ell M_{4,r}^+ = \left\{ P_4(\ell, \theta) ~|~ \ell =r \textrm{ and }\theta <  \arccos \left( 1 - \frac{r^2}{2-\frac{\ell^2}{2}} \right) \right\} \]
  \[ \partial^\theta M_{4,r}^+ = \left\{ P_4(\ell, \theta) ~|~  \ell < r \textrm{ and }\theta =  \arccos \left( 1 - \frac{r^2}{2-\frac{\ell^2}{2}} \right)  \right\} \]

In order to find the density $d \mu_B$, we must calculate the derivatives $ \frac{\partial \theta }{\partial r} $ and $ \frac{\partial \ell }{\partial r}$. Doing so, we find
  \[  d \mu_B \propto \begin{cases}  \frac{\partial \theta }{\partial r} =  \dfrac{2 r}{ \left(2-l^2/2 \right)   \sqrt{ 1- \left(1-\frac{r^2}{2- \ell^2/2}\right)^2 } } \textrm{ on } \partial^\theta M_{4,r}^+{} \\
    \frac{\partial \ell}{\partial r} \equiv 1  \textrm{ on } \partial^\ell M_{4,r}^+{}. \end{cases} \]

The measure $\mu_I$ satisfies the following.
\[ d \mu_I \propto \arccos \left( 1 - \frac{r^2}{2-\frac{\ell^2}{2}} \right) \textrm{ on } \partial^\theta M_{4,r}^+. \]

 We will not compute $d \mu_I$ explicitly on $\partial^\ell M_{4,r}^+$. However, for a fixed $r$ value, $d \mu_I$ is proportional to the height of the shaded region in Figure 1 as a function of $\theta$.

\section{Stochastic orderings of the boundary measures}

We now prove various lemmata on the stochastic ordering of $\mu_B$ and $\mu_I$. To do so, we first note the following two lemmas. These can be proven directly by differentiating the relevant distance formulas, so we omit the proofs here.

\begin{lemma}
For a fixed action coordinate $\ell$, the distance $d(v_2,v_4)$ is monotonically increasing in $\theta$ and the other distances are unchanged. As such, $M_{4,r}^+$ is star-shaped with respect to $\theta$
\end{lemma}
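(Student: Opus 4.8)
The plan is to argue entirely in the $(\phi,\theta)$ coordinates of Section~\ref{Notation}, with vertices $v_1=(0,0,0)$, $v_2=(\cos\phi,\sin\phi,0)$, $v_3=(2\cos\phi,0,0)$, and $v_4=(\cos\phi,\sin\phi\cos\theta,\sin\phi\sin\theta)$. Since $\ell=2\cos\phi$, holding the action coordinate $\ell$ fixed is the same as holding $\phi$ fixed; hence $v_1,v_2,v_3$ do not move as $\theta$ varies, and only $v_4$ depends on $\theta$. Because $\sum_{i=j}^{k} e_i = v_{k+1}-v_j$, the diameter of $P_4$ is the maximum of the six pairwise vertex distances, so it suffices to track how each of these depends on $\theta$.

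First I would check that five of the six distances are genuinely $\theta$-independent. The edges give $d(v_1,v_2)=d(v_2,v_3)=d(v_3,v_4)=d(v_4,v_1)=1$: the first two do not involve $v_4$ at all, and for the last two a one-line computation gives $d(v_4,v_1)^2=\cos^2\phi+\sin^2\phi(\cos^2\theta+\sin^2\theta)=1$ and likewise $d(v_3,v_4)^2=1$, independently of $\theta$. The diagonal $d(v_1,v_3)=2\cos\phi=\ell$ is fixed by assumption. Thus only $d(v_2,v_4)$ can change, and
\[
 d(v_2,v_4)^2 = \sin^2\phi\big[(1-\cos\theta)^2+\sin^2\theta\big] = \sin^2\phi\,(2-2\cos\theta) = \Big(1-\tfrac{\ell^2}{4}\Big)(2-2\cos\theta).
\]
Differentiating, $\partial_\theta\, d(v_2,v_4)^2 = 2\sin^2\phi\,\sin\theta$, which is $\ge 0$ for $\theta\in[0,\pi]$ (the range defining $M_4^+$) and strictly positive for $\theta\in(0,\pi)$; this is exactly the claimed monotonicity of $d(v_2,v_4)$ in $\theta$, while all other distances stay put.

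For the star-shapedness assertion I would unwind the definition of $M_{4,r}^+$: a point $(\ell,\theta)$ lies in $M_{4,r}^+$ iff all six pairwise distances are $<r$. In the regime of interest $r>1$, the four unit edges automatically satisfy the constraint, and the constraint $d(v_1,v_3)=\ell<r$ does not involve $\theta$. Hence, for each fixed admissible $\ell$, the slice $\{\theta: (\ell,\theta)\in M_{4,r}^+\}$ is cut out by the single inequality $d(v_2,v_4)<r$; by the monotonicity above this slice has the form $[0,\theta_{\max}(\ell))$ (or all of $[0,\pi]$), and in particular it is downward closed in $\theta$. Therefore $(\ell,\theta)\in M_{4,r}^+$ and $0\le\theta'\le\theta$ imply $(\ell,\theta')\in M_{4,r}^+$, which is the sense in which $M_{4,r}^+$ is star-shaped with respect to $\theta$ — and precisely what makes the projection $\pi_\theta$ (which moves $\theta$ while fixing $\ell$) well defined.

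I do not expect a genuine obstacle here: the whole argument is elementary trigonometry plus one sign check on $\sin\theta$. The two places that warrant a sentence of care are (i) identifying the confinement diameter with the maximum over the six vertex–vertex distances, so that controlling $d(v_2,v_4)$ really controls membership in $M_{4,r}^+$, and (ii) the fact that the monotonicity degenerates at the endpoints $\theta=0,\pi$ where $\sin\theta=0$; this is harmless, since strict monotonicity on the interior already yields the needed interval structure of each $\ell$-slice.
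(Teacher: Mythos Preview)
Your proof is correct and follows exactly the approach the paper indicates: the paper omits the argument, remarking only that the lemma ``can be proven directly by differentiating the relevant distance formulas,'' and you have supplied precisely that computation together with the easy deduction of star-shapedness.
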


 Since for fixed $\theta$, $d(v_2,v_4)$ is decreasing in the $\ell$-coordinate, we have the following.

\begin{lemma}
If $\ell < \ell^\prime < r$ and $P_4(\ell, \theta) \in M_{4,r}^+ $, then $P_4(\ell^\prime, \theta) \in M_{4,r}^+ $.  
\end{lemma}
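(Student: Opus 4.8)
The plan is to reduce membership in $M_{4,r}^+$ to a short finite list of inter-vertex distances and then check each one is preserved when $\ell$ is increased with $\theta$ held fixed. First I would observe that every partial edge-sum $\sum_{i=j}^{k} e_i$ appearing in the definition of $M_{n,r}$ is the chord $\overline{v_j v_{k+1}}$, so for a quadrilateral the only values of $\|\sum_{i=j}^k e_i\|$ that occur are the four unit edge lengths together with the two diagonals $d(v_1,v_3)=\ell$ and $d(v_2,v_4)$. Consequently $P_4(\ell,\theta)\in M_{4,r}^+$ if and only if $0\le\theta\le\pi$, $1<r$, $\ell<r$, and $d(v_2,v_4)<r$; in the range $1<r<\sqrt 2$ under consideration the inequality $1<r$ is automatic.

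Next I would invoke the explicit formula already established, $d(v_2,v_4)^2=\bigl(1-\tfrac{\ell^2}{4}\bigr)(2-2\cos\theta)$, which shows that for fixed $\theta$ the map $\ell\mapsto d(v_2,v_4)^2$ is strictly decreasing on $[0,2]$, since the factor $1-\tfrac{\ell^2}{4}$ is (this is precisely the remark preceding the statement). Now suppose $\ell<\ell'<r$ with $P_4(\ell,\theta)\in M_{4,r}^+$. The angle coordinate is unchanged, so $0\le\theta\le\pi$ still holds; $\ell'<r$ holds by hypothesis; and $d(v_2,v_4)$ evaluated at $(\ell',\theta)$ is strictly smaller than its value at $(\ell,\theta)$, which is $<r$. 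I would also note in passing that $P_4(\ell',\theta)$ is a genuine point of $M_4^+$: since $\ell'<r<\sqrt 2<2$, the triangles $\triangle(v_1v_2v_3)$ and $\triangle(v_1v_3v_4)$ with unit legs and base $\ell'$ exist, and the dihedral angle $\theta$ can be prescribed freely in $[0,\pi]$. All defining inequalities for $M_{4,r}^+$ therefore hold at $(\ell',\theta)$, giving $P_4(\ell',\theta)\in M_{4,r}^+$.

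There is essentially no obstacle here; the statement is a direct verification. The only point requiring a moment's care is the reduction in the first step, namely that the diameter of an equilateral quadrilateral is controlled solely by the six numbers listed (four edges of length $1$, two diagonals $\ell$ and $d(v_2,v_4)$) and no longer partial sum can intervene — but this is immediate once one recognizes each partial sum as a chord between two of the four vertices. Everything after that is the monotonicity of $1-\tfrac{\ell^2}{4}$, which the paper has already flagged.
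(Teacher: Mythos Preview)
Your proposal is correct and follows exactly the approach indicated in the paper: the paper omits a formal proof but immediately precedes the lemma with the observation that for fixed $\theta$, $d(v_2,v_4)$ is decreasing in the $\ell$-coordinate, which together with $\ell'<r$ is precisely your argument. You have simply written out in full the verification the paper leaves to the reader.
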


Combining these two lemmas, this implies that the density $d \mu_I$ is non-increasing as a function of $\theta$ on $\partial^\ell M_{4,r}^+$. Since $\mu_I$ and $\mu_B$ are normalized to have the same total mass on $\partial^\ell M_{4,r}^+$, this shows the following.

\begin{lemma} \label{stoch1}
 The measure $\mu_I$ is stochastically less than $\mu_B$ on $\partial^\ell M_{4,r}^+$ as a function of $\theta$.
\end{lemma}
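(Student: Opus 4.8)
\emph{Proof strategy.} The plan is to deduce the stochastic ordering from a single-crossing property of the two densities. Writing $\theta_{\max}$ for the supremum of the $\theta$-values occurring on $\partial^\ell M_{4,r}^+$, what must be shown is that $\mu_I(\{\theta \le t\}) \ge \mu_B(\{\theta \le t\})$ for every $t \in [0,\theta_{\max}]$, i.e.\ that the distribution function of $\theta$ under $\mu_I$ dominates the one under $\mu_B$. Two facts are already in hand. First, the computation of $d\mu_B$ above gives $\partial\ell/\partial r \equiv 1$ on $\partial^\ell M_{4,r}^+$, so $\mu_B$ has constant density there as a function of $\theta$. Second, as observed before the statement, $\mu_I$ has non-increasing density in $\theta$ on $\partial^\ell M_{4,r}^+$: by the two preceding lemmas, for each fixed $\theta$ the slice $\{\ell : P_4(\ell,\theta) \in M_{4,r}^+\}$ is an interval $(\ell_{\min}(\theta),r)$ whose left endpoint is non-decreasing in $\theta$ — since $d(v_2,v_4)$ increases with $\theta$ and decreases with $\ell$ — so its length, to which $d\mu_I/d\theta$ is proportional, is non-increasing. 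Finally, $\mu_I$ and $\mu_B$ carry the same total mass $\alpha$ on $\partial^\ell M_{4,r}^+$.

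Set $\rho(\theta) = \frac{d\mu_I}{d\theta}(\theta) - \frac{d\mu_B}{d\theta}(\theta)$ on $[0,\theta_{\max}]$. Because $d\mu_B/d\theta$ is constant and $d\mu_I/d\theta$ is non-increasing, $\rho$ is non-increasing; because the two measures have equal mass on the interval, $\int_0^{\theta_{\max}} \rho(\theta)\,d\theta = 0$. A non-increasing function with vanishing integral cannot be everywhere positive or everywhere negative, so there is some $\theta^\ast \in [0,\theta_{\max}]$ with $\rho \ge 0$ on $[0,\theta^\ast]$ and $\rho \le 0$ on $[\theta^\ast,\theta_{\max}]$ (the case $\rho \equiv 0$ being permitted).

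It remains to integrate $\rho$ over the two regimes. For $t \le \theta^\ast$, $\mu_I(\{\theta \le t\}) - \mu_B(\{\theta \le t\}) = \int_0^t \rho(\theta)\,d\theta \ge 0$ since $\rho \ge 0$ on $[0,t]$. For $t \ge \theta^\ast$, using $\int_0^{\theta_{\max}}\rho = 0$, that same difference equals $-\int_t^{\theta_{\max}} \rho(\theta)\,d\theta \ge 0$ since $\rho \le 0$ on $[t,\theta_{\max}] \subseteq [\theta^\ast,\theta_{\max}]$. Thus $\mu_I(\{\theta \le t\}) \ge \mu_B(\{\theta \le t\})$ for all $t$, which is precisely the claim. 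The only genuinely geometric ingredient is the monotonicity of $d\mu_I/d\theta$, and that has already been reduced to the two preceding lemmas; the rest is the elementary but essential observation that a bare pointwise comparison of densities would not suffice — indeed it must fail, since two equal-mass densities that are not identical must cross — whereas a monotone density set against a constant one of equal mass crosses exactly once, and this is enough to force the distribution-function inequality.
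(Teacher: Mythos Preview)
Your argument is correct and follows the same approach as the paper: both deduce the stochastic ordering from the facts that $d\mu_B$ is constant in $\theta$, $d\mu_I$ is non-increasing in $\theta$ (via the two preceding lemmas), and the two measures have equal total mass $\alpha$ on $\partial^\ell M_{4,r}^+$. The paper simply asserts that these three facts yield the conclusion, whereas you spell out the single-crossing step explicitly; this added detail is sound and is exactly the implication the paper takes for granted.
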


Although it is more difficult to prove, a similar phenomena also occurs on $\partial^\theta M_{4,r}^+$.

\begin{lemma} \label{stoch2}
 For $r< \sqrt{2}$, the measure $\mu_I$ is stochastically less than $\mu_B$ on $\partial^\theta M_{4,r}^+$ as a function of $\ell$.
\end{lemma}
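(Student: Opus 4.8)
The plan is to exhibit a monotone likelihood ratio between $\mu_B$ and $\mu_I$ on $\partial^\theta M_{4,r}^+$. Both measures are densities with respect to $d\ell$ and have been normalized to the common total mass $1-\alpha$, so once we know that the ratio $\frac{d\mu_B}{d\mu_I}$ is a non-decreasing function of $\ell$, a standard argument --- a density ratio that is monotone in $\ell$ forces the corresponding stochastic ordering, the same mechanism used to obtain Lemma~\ref{stoch1} --- yields $\int_t^r d\mu_B \geq \int_t^r d\mu_I$ for every threshold $t$, which is exactly the assertion that $\mu_I$ is stochastically less than $\mu_B$ as a function of $\ell$.

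To carry this out I would first simplify the densities recorded in the previous subsection. Set $u = 2 - \ell^2/2$ and $w = 1 - r^2/u$, so that $\theta = \arccos w$ on $\partial^\theta M_{4,r}^+$ and $1 - w^2 = (1-w)(1+w) = \tfrac{r^2}{u}\cdot\tfrac{2u-r^2}{u}$; since $2u - r^2 = 4 - \ell^2 - r^2 > 0$ (as $\ell < r < \sqrt{2}$), this gives $\sqrt{1-w^2} = \tfrac{r\sqrt{2u-r^2}}{u}$. Hence on $\partial^\theta M_{4,r}^+$
\[ d\mu_B \;\propto\; \frac{2r}{u\sqrt{1-w^2}} \;=\; \frac{2}{\sqrt{4-\ell^2-r^2}}, \qquad d\mu_I \;\propto\; \arccos w \;=\; \theta(\ell), \]
where $\theta(\ell) = \arccos\!\bigl(1 - \tfrac{r^2}{2-\ell^2/2}\bigr)\in(0,\pi)$ is strictly increasing in $\ell$ and may therefore be used as the parameter in place of $\ell$. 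From $1 - \cos\theta = \tfrac{r^2}{2-\ell^2/2}$ one gets $4 - \ell^2 - r^2 = r^2\,\tfrac{1+\cos\theta}{1-\cos\theta} = r^2\cot^2(\theta/2)$, and consequently
\[ \frac{d\mu_B}{d\mu_I} \;\propto\; \frac{1}{\sqrt{4-\ell^2-r^2}\;\theta} \;=\; \frac{\tan(\theta/2)}{r\,\theta}. \]

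It then suffices to check that $\theta \mapsto \tan(\theta/2)/\theta$ is non-decreasing on $(0,\pi)$. Its derivative has the sign of $\tfrac{\theta}{2}\sec^2(\theta/2) - \tan(\theta/2)$; multiplying by $\cos^2(\theta/2)>0$ turns this into $\tfrac{\theta}{2} - \sin(\theta/2)\cos(\theta/2) = \tfrac12(\theta-\sin\theta) \geq 0$, which is the elementary inequality $\sin\theta\leq\theta$ (strict on $(0,\pi)$, so the ratio is in fact strictly increasing). Thus the whole lemma collapses to $\sin\theta\leq\theta$; the only thing requiring attention is the bookkeeping --- that both densities are taken against $d\ell$, that $\theta(\ell)$ is a legitimate monotone reparametrization, and the direction of the order (since $\frac{d\mu_B}{d\mu_I}$ increases with $\ell$, $\mu_B$ places relatively more weight at large $\ell$, so $\mu_I$ is the stochastically smaller measure). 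I do not expect a serious obstacle; alternatively one may avoid the half-angle substitution and verify directly that $\ell\mapsto\sqrt{4-\ell^2-r^2}\,\arccos\!\bigl(1-\tfrac{r^2}{2-\ell^2/2}\bigr)$ is non-increasing, but the substitution makes the computation transparent.
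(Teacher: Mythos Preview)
Your proposal is correct and follows the same overall strategy as the paper: establish that the likelihood ratio $\frac{d\mu_B}{d\mu_I}$ is monotone in $\ell$, and then invoke the standard implication from monotone likelihood ratio to first-order stochastic dominance.

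The execution, however, is genuinely cleaner than the paper's. The paper writes out $\frac{d\mu_B}{d\mu_I}$, differentiates in $\ell$, isolates a numerator factor
\[
\Psi_1(\ell,r) \;=\; -\,2\sqrt{\frac{r^2(4-\ell^2-r^2)}{(4-\ell^2)^2}} \;+\; \arccos\!\left(\frac{-4+\ell^2+2r^2}{-4+\ell^2}\right),
\]
and then differentiates \emph{again} in $\ell$ to show $\Psi_1\ge 0$. Your half-angle substitution $\sqrt{4-\ell^2-r^2}=r\cot(\theta/2)$ collapses the whole computation to the monotonicity of $\tan(\theta/2)/\theta$, i.e.\ to the inequality $\theta\ge\sin\theta$. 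In fact your substitution shows directly that the paper's $\Psi_1$ is nothing but $\theta-\sin\theta$ in disguise (so the paper's assertion that ``$\Psi_1$ vanishes at $\ell=0$'' is actually false---it is strictly positive there---though this does not affect the paper's conclusion since $\partial\Psi_1/\partial\ell\ge 0$ still yields $\Psi_1\ge 0$). What your approach buys is transparency: the entire lemma reduces to $\sin\theta\le\theta$, with no second differentiation and no risk of sign errors in a complicated rational expression. The only care needed, which you already flag, is checking that $\theta(\ell)$ is an increasing reparametrization and that the direction of the stochastic order comes out right.
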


\begin{proof}
To show this, we will prove the monotonicity of likelihood ratio property, which implies first-order stochastic dominance. On $\partial^\theta M_{4,r}^+$, we consider  
\begin{eqnarray*}  \frac{d \mu_B}{d \mu_I} &=& \frac{\partial}{\partial r} \log \left[ \arccos \left( 1 - \frac{r^2}{2-\frac{\ell^2}{2}} \right) \right]   \\
& =& \frac { 2 r } { \left( 2 - \frac { \ell ^ { 2 } } { 2 } \right) \sqrt { 1 - \left( 1 - \frac { r ^ { 2 } } { 2 - \frac { \ell ^ { 2 } } { 2 } } \right) ^ { 2 } } \arccos \left[ 1 - \frac { r ^ { 2 } } { 2 - \frac { \ell ^ { 2 } } { 2 } } \right] }
\end{eqnarray*}

We want to show that this is increasing in $\ell$. To do so, we take a further derivative.

\begin{eqnarray*}
	\frac{\partial}{\partial \ell} \frac{d \mu_B}{d \mu_I} & = & - \frac{2 \ell r ^ { 3 } \left( - 2 \sqrt { - \frac { r ^ { 2 } \left( - 4 +\ell ^ { 2 } + r ^ { 2 } \right) } { \left( - 4 + \ell ^ { 2 } \right) ^ { 2 } } } + \arccos \left[ \frac { - 4 + \ell ^ { 2 } + 2 r ^ { 2 } } { - 4 + \ell ^ { 2 } } \right] \right)}{      \left( - 4 + \ell ^ { 2 } \right) ^ { 3 } \left( - \frac { r ^ { 2 } \left( - 4 +\ell ^ { 2 } + r ^ { 2 } \right) } { \left( - 4 + \ell ^ { 2 } \right) ^ { 2 } } \right) ^ { 3 / 2 } \arccos \left[ \frac { - 4 + \ell ^ { 2 } + 2 r ^ { 2 } } { - 4 + \ell ^ { 2 } } \right] ^ { 2 }}
\end{eqnarray*}

The denominator of this term is negative, so we disregard it and consider only the terms in parenthesis in the numerator, which we define as $\Psi_1$:

\[ \Psi_1(\ell,r) := - 2 \sqrt {  \frac { r ^ { 2 } \left(  4 -\ell ^ { 2 } - r ^ { 2 } \right) } { \left(  4 - \ell ^ { 2 } \right) ^ { 2 } } } + \arccos \left[ \frac { - 4 + \ell ^ { 2 } + 2 r ^ { 2 } } { - 4 + \ell ^ { 2 } } \right] \]

If we can show that $ \Psi_1(\ell,r)$ is non-negative for $\ell,r >0$, then necessarily the entire expression will be as well. However, $ \Psi_1$ vanishes at $\ell=0$. As such, we consider $\frac{\partial \Psi_1}{\partial \ell}$ and show that this is non-negative. Doing so, we find that

\[ \frac{\partial \Psi_1}{\partial \ell} =  \frac { 4 \ell r ^ { 4 } } { \left( 4 -\ell  ^ { 2 } \right) ^ { 3 } \sqrt { - \frac { r ^ { 2 } \left( - 4 + \ell ^ { 2 } + r ^ { 2 } \right) } { \left( - 4 +\ell ^ { 2 } \right) ^ { 2 } } } } \]

Since this is non-negative for $\ell>0$, $\Psi_1$ is non-negative for $\ell >0$ and hence $ \frac{d \mu_B}{d \mu_I}$ is non-decreasing on $\partial^\theta M_{4,r}^+$. This implies that $\mu_I$ is stochastically less than $\mu_B$, as desired. 

\end{proof}

\section{Monotonicity of total curvature}

We now consider the curvature of spatial quadrilaterals, in order to show that larger polygons have smaller total curvature. More precisely, we show that the curvature is decreasing in the the $\ell$ and $\theta$ coordinates. We first show that the curvature is decreasing if one increases the angle coordinate while leaving the $\ell$ coordinate fixed.

\begin{lemma} \label{unfoldmonotone}
Suppose $P_4$ has action-angle coordinates $ (\ell, \theta)$. Then, the $\ell$-coordinates, the total curvature of the spatial polygon is monotonically decreasing in $\theta$ as $\theta$ varies from $0$ to $\pi$.
\end{lemma}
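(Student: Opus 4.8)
The plan is to exploit the rigid ``two-flap'' description of $P_4$ coming from the coordinates of Section~\ref{Notation}: with $\ell = 2\cos\phi$ held fixed, the triangle $\triangle(v_1v_2v_3)$ is completely determined and does not move, while $\triangle(v_1v_3v_4)$ is a congruent copy hinged along the diagonal $\overline{v_1v_3}$ and rotated by the dihedral angle $\theta$. I will write the total curvature as the sum of the four turning angles $\sum_{i=1}^4\angle(e_i,e_{i+1})$, where the turning angle at $v_{i+1}$ is $\pi$ minus the interior angle of the (possibly non-planar) quadrilateral there. The first step is to note that the turning angles at $v_2$ and at $v_4$ are independent of $\theta$: each is the exterior angle at the apex of an isoceles triangle with two unit legs and base $\overline{v_1v_3}$ of fixed length $2\cos\phi$, so each equals $2\phi$ (equivalently $e_1\cdot e_2 = e_3\cdot e_4 = \cos 2\phi$, independent of $\theta$). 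Thus all the $\theta$-dependence is concentrated in the turning angles at $v_1$ and $v_3$.

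The second step is an explicit computation in the coordinates of Section~\ref{Notation}. The interior angle of $P_4$ at $v_1$ is the angle between the unit vectors $v_2-v_1$ and $v_4-v_1$, and a direct computation gives $(v_2-v_1)\cdot(v_4-v_1) = \cos^2\phi + \sin^2\phi\cos\theta$. Since $P_4$ has a reflection symmetry through the perpendicular bisector plane of $\overline{v_1v_3}$ that interchanges $v_1\leftrightarrow v_3$ and fixes $v_2$ and $v_4$, the interior angle at $v_3$ has the same cosine (one may also verify $(v_2-v_3)\cdot(v_4-v_3) = \cos^2\phi + \sin^2\phi\cos\theta$ directly). Setting $\beta(\theta) := \arccos\!\left(\cos^2\phi + \sin^2\phi\cos\theta\right)$, the turning angle at each of $v_1$ and $v_3$ equals $\pi - \beta(\theta)$, so
\[
\sum_{i=1}^4 \angle(e_i,e_{i+1}) \;=\; 4\phi + 2\pi - 2\beta(\theta).
\]

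For the final step I will check that $\theta\mapsto\beta(\theta)$ is increasing. On $(0,\pi)$ the argument $\cos^2\phi + \sin^2\phi\cos\theta$ is strictly decreasing in $\theta$ and lies in $(\cos 2\phi,\,1)\subset(-1,1)$, so $\beta$ is differentiable there with
\[
\beta'(\theta) \;=\; \frac{\sin^2\phi\,\sin\theta}{\sqrt{\,1-\left(\cos^2\phi+\sin^2\phi\cos\theta\right)^2\,}} \;>\; 0,
\]
and $\beta$ is continuous on $[0,\pi]$; hence $4\phi + 2\pi - 2\beta(\theta)$ is (strictly, on the interior) decreasing in $\theta$ on $[0,\pi]$, which is the claim. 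I expect the only delicate point to be the bookkeeping of interior versus turning angles together with the orientation conventions on the $e_i$, and checking that the $\arccos$ stays in its differentiable range; once the total curvature is reduced to a constant minus the single scalar $2\beta(\theta)$, the monotonicity is immediate, so there is no substantive obstacle.
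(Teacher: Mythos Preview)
Your proof is correct and follows essentially the same approach as the paper: both arguments observe that the turning angles at $v_2$ and $v_4$ are independent of $\theta$ (since $e_1\cdot e_2=e_3\cdot e_4=\cos 2\phi$), and then reduce the question to the monotonicity of $\arccos(\pm(\cos^2\phi+\sin^2\phi\cos\theta))$ in $\theta$. Your version is somewhat more explicit---you verify the $\theta$-independence at $v_2,v_4$, invoke the reflection symmetry across the perpendicular bisector of $\overline{v_1v_3}$, and differentiate $\beta$---whereas the paper simply asserts which angles change and reads off the monotonicity from the dot-product formula.
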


\begin{proof}
Changing $\theta_j$ only changes the angle between $\angle(e_1,e_4)$ and $\angle(e_2,e_3)$. As such, it suffices to show that both of these are decreasing in $\theta_j$.  As before, we suppose $e_1= (\cos \phi , \sin \phi , 0)$, $e_4= (-\cos \phi, -\sin \phi \cos \theta, \sin \theta \sin \phi)$. 
We consider $\phi= \angle e_1,e_4$  as the argument $\angle(e_2,e_{3})$ is exactly the same. The angle $\phi$ satisfies $\cos(\phi)= \langle e_1,e_4 \rangle =  -\cos^2 \phi - \sin^2 \phi \cos \theta$, which is decreasing in $\theta$ as $\theta$ ranges from $0$ to $\pi$. 
\end{proof}

The curvature is also decreasing if we increase the action coordinate. From experimentation with spatial polygons, this is intuitively plausible, but it is not so simple to prove analytically. The reason for this is that when $\theta = \pi$, the total curvature is constant in $\ell$. As such, any proof must be sensitive to the fact that all derivatives of the total curvature vanish when $\theta = \pi$. 

\begin{lemma} \label{stretchmonotone}
For a quadrilateral with fixed angle coordinate $\theta$, the total curvature is non-increasing in $\ell$.
\end{lemma}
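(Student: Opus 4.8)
The plan is to reduce the total curvature to a sum of two dihedral-type angles and show each is non-increasing in $\ell$ for fixed $\theta$. Using the coordinates from Section~\ref{Notation}, with $v_1=(0,0,0)$, $v_2=(\cos\phi,\sin\phi,0)$, $v_3=(2\cos\phi,0,0)$, $v_4=(\cos\phi,\sin\phi\cos\theta,\sin\phi\sin\theta)$ and $\ell=2\cos\phi$, the four edge vectors $e_1,e_2,e_3,e_4$ are explicit trigonometric functions of $\phi$ and $\theta$. The first step is to write out $\angle(e_1,e_2)$, $\angle(e_2,e_3)$, $\angle(e_3,e_4)$, $\angle(e_4,e_1)$ as $\arccos$ of inner products. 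Because the quadrilateral is equilateral, the two triangles $\triangle(v_1v_2v_3)$ and $\triangle(v_1v_4v_3)$ are congruent isosceles triangles sharing the base $\overline{v_1v_3}$ of length $\ell$; this forces $\angle(e_1,e_2)=\angle(e_3,e_4)$ and $\angle(e_2,e_3)=\angle(e_4,e_1)$ by the symmetry swapping $v_2\leftrightarrow v_4$, so the total curvature is $2\bigl(\angle(e_1,e_2)+\angle(e_4,e_1)\bigr)$, a function of $\ell$ and $\theta$ alone. I would then compute the two inner products: $\langle e_1,e_2\rangle$ depends only on $\phi$ (hence only on $\ell$), while $\langle e_4,e_1\rangle=-\cos^2\phi-\sin^2\phi\cos\theta$ carries the $\theta$ dependence, exactly as in the proof of Lemma~\ref{unfoldmonotone}.

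The second step is the differentiation. Since $\ell=2\cos\phi$ is monotone in $\phi$ on $(0,\pi/2)$, it is equivalent (and cleaner) to show both angles are non-decreasing in $\phi$, i.e.\ that $\tfrac{d}{d\phi}\angle(e_1,e_2)\ge 0$ and $\tfrac{d}{d\phi}\angle(e_4,e_1)\ge 0$ for $0<\phi<\pi/2$ and $0\le\theta\le\pi$. Differentiating $\arccos$ of each inner product introduces a factor $-1/\sqrt{1-(\,\cdot\,)^2}$, which is negative, so the claim becomes a sign condition on the $\phi$-derivatives of the two inner products: one must check $\partial_\phi\langle e_1,e_2\rangle\le 0$ and $\partial_\phi\langle e_4,e_1\rangle\le 0$. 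The second of these is immediate from $\langle e_4,e_1\rangle=-\cos^2\phi-\sin^2\phi\cos\theta=-\cos\theta-(1-\cos\theta)\cos^2\phi$, whose $\phi$-derivative is $2(1-\cos\theta)\sin\phi\cos\phi\ge 0$ — wait, that has the wrong sign, so in fact $\langle e_4,e_1\rangle$ is \emph{increasing} in $\phi$, meaning $\angle(e_4,e_1)$ is \emph{decreasing} in $\phi$ and hence decreasing in $\ell$ as well; so the correct bookkeeping is to track signs carefully and conclude each of the two angle contributions is non-increasing in $\ell$. The remaining term $\angle(e_1,e_2)$, a planar triangle angle, is handled by elementary triangle trigonometry: in the isosceles triangle with base $\ell$ and legs $1$, the apex-adjacent edge angle is $\arccos(\ell/2)$ up to sign conventions, which is manifestly monotone in $\ell$.

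The main obstacle, flagged by the authors themselves, is the degeneracy at $\theta=\pi$: there all $\ell$-derivatives of the total curvature vanish, so a naive "strictly decreasing" argument cannot work and one must settle for "non-increasing," being careful that the sign estimates degrade gracefully (the factor $1-\cos\theta$ vanishing at $\theta=\pi$ is exactly what kills the derivative there, which is consistent rather than problematic). A secondary technical point is the behavior at the other end of the range — $\ell\to 0$ (i.e.\ $\phi\to\pi/2$), where $v_1$ and $v_3$ coincide and the coordinates degenerate, and $\ell\to r$ or $\ell\to 2$, where $\phi\to 0$ and the square-root factors $\sqrt{1-\langle\cdot\rangle^2}$ may vanish; I would verify that the derivative expressions extend continuously (or that the monotonicity survives by a limiting argument) across these boundary loci. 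Once both angle contributions are shown to have non-positive $\ell$-derivative on the interior, summing gives $\partial_\ell\bigl[2(\angle(e_1,e_2)+\angle(e_4,e_1))\bigr]\le 0$, which is the claim.
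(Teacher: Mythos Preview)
Your proposal has a genuine error that makes the term-by-term strategy unworkable. You correctly compute $\partial_\phi\langle e_4,e_1\rangle = 2(1-\cos\theta)\sin\phi\cos\phi \ge 0$, so that $\angle(e_4,e_1)$ is \emph{decreasing} in $\phi$. But then you write ``and hence decreasing in $\ell$ as well'' --- this is backwards. Since $\ell = 2\cos\phi$ is \emph{decreasing} in $\phi$ on $(0,\pi/2)$, a quantity that is decreasing in $\phi$ is \emph{increasing} in $\ell$. So in fact $\angle(e_4,e_1)$ is non-decreasing in $\ell$, and strictly increasing whenever $\theta\in(0,\pi)$. The two angle contributions therefore pull in opposite directions: $\angle(e_1,e_2)=2\phi$ decreases in $\ell$, while $\angle(e_4,e_1)$ increases in $\ell$. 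Your plan of showing each summand is separately non-increasing in $\ell$ cannot succeed.

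This is not an accident of bookkeeping: at $\theta=\pi$ one has $\angle(e_4,e_1)=\pi-2\phi$, so the two terms cancel exactly and the total curvature is the constant $2\pi$, as the paper notes. Any valid proof must therefore compare the two opposing derivatives and show the decreasing one dominates. The paper does precisely this: after the change of variables $t=\cos^2\phi$, $c=\cos\theta$, it writes $\partial_t\kappa$ as a positive term (coming from $\angle(e_4,e_1)$) minus a positive term (coming from $\angle(e_1,e_2)$), and then bounds the first term uniformly in $c$ by a concavity/secant-line argument for $\Psi_2(x)=\sqrt{1-(-x(1-t)-t)^2}$, the maximum being attained at $c=-1$ where the two terms coincide. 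That comparison is the missing idea in your proposal.
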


\begin{proof}
Using our initial embedding for quadrilaterals, we can see that the total curvature $\kappa$ is
\begin{eqnarray*}
 \kappa &= & 2 \arccos(e_1\cdot e_4)+2 \arccos(e_1 \cdot e_2) \\
            & = & 2 \arccos(- \cos^2 \phi  \sin^2 \phi \cos \theta) + 2 \arccos(\cos^2 \phi- \sin^2 \phi) \\
            & =&  2 \arccos( -\cos^2 \phi - \sin^2 \phi \cos \theta) +4 \phi 
\end{eqnarray*}

To continue, we change our coordinates to $t=\cos^2 \phi$ and $c=\cos \theta$. In these new coordinates,
\begin{eqnarray*}
 \kappa &= &  2 \arccos( -t - (1-t)c) +2 \arccos(2t-1).
\end{eqnarray*}
Taking the derivative of $\kappa$ with respect to $t$, we find the following.
\begin{eqnarray*}
\frac{ \partial \kappa}{\partial t} &= & - \frac { 2 ( - 1 + c ) } { \sqrt { 1 - ( - c ( 1 - t ) - t ) ^ { 2 } } } - \frac { 4 } { \sqrt { 1 -(-1+ 2t)^2 } } \\
\end{eqnarray*}

To show that this expression is non-positive, we fix $t$ and maximize the first term with respect to $c$. Note that this is equivalent to maximizing \[  \frac  { \sqrt { 1 - ( - c ( 1 - t ) - t ) ^ { 2 } } } {  ( - 1 + c ) }, \]
which is the slope of the secant line for the function \[ \Psi_2(x) = \sqrt { 1 - ( - x ( 1 - t ) - t ) ^ { 2 } } \] through the points $x=c$ and $x=1$. Computing the second derivative of $\Psi_2$, we find the following.
\[\frac{d^2 \Psi_2}{d x^2} = - \frac { ( t-1 ) ^ { 2 } } { \left( (t-1)(-1+t(-1+x)-x)(x-1) \right)^{3/2}} \]

This is non-positive and so $\Psi_2$ is concave. Since $-1 \leq c <1 $, in order to maximize the slope of the secant line, we set $c=-1$. Doing so, we find that
\[ \frac{ \partial \kappa}{\partial t} \leq  \frac { 4 } { \sqrt { 1 -(-1+ 2t)^2 } }- \frac { 4 } { \sqrt { 1 -(-1+ 2t)^2 } } = 0 \]
\end{proof}

These two lemmas show that if we increase either the angle or the action (or both), the total curvature decreases. When combined with Lemma 1, the results of this section show the following.

\begin{lemma}
The total curvature of a quadrilateral in $\partial^\ell M_{4,r}^+$ is decreasing as a function of $\theta$. Similarly, the total curvature of a quadrilateral in $\partial^\theta M_{4,r}^+$ is non-increasing as a function of $\ell$.
\end{lemma}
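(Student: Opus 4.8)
The plan is to reduce both assertions to the two pointwise monotonicity statements already established: Lemma \ref{unfoldmonotone} (for fixed $\ell$, the total curvature $\kappa$ is monotonically decreasing in $\theta$ as $\theta$ ranges over $[0,\pi]$) and Lemma \ref{stretchmonotone} (for fixed $\theta$, $\kappa$ is non-increasing in $\ell$). Write $\kappa=\kappa(\ell,\theta)$ for the total curvature in action-angle coordinates.

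The first assertion is immediate. On $\partial^\ell M_{4,r}^+{}$ the action coordinate is pinned at $\ell=r$, and the angle coordinate runs over the subinterval $\bigl[0,\arccos(1-\tfrac{r^2}{2-\ell^2/2})\bigr]\subset[0,\pi]$. Restricting $\theta\mapsto\kappa(r,\theta)$ to this subinterval, Lemma \ref{unfoldmonotone} gives exactly that it is decreasing.

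For the second assertion, recall from the explicit description of the boundary that on $\partial^\theta M_{4,r}^+{}$ the coordinates satisfy $\theta=\theta(\ell):=\arccos\!\bigl(1-\tfrac{r^2}{2-\ell^2/2}\bigr)$ with $0\le\ell<r$. I would first check that $\theta(\ell)$ is non-decreasing in $\ell$: as $\ell$ increases, $2-\ell^2/2$ decreases, hence $\tfrac{r^2}{2-\ell^2/2}$ increases, hence the argument $1-\tfrac{r^2}{2-\ell^2/2}$ of the decreasing function $\arccos$ decreases, so $\theta(\ell)$ increases; and $\theta(\ell)\in[0,\pi]$ since it is a value of $\arccos$. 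Thus moving along $\partial^\theta M_{4,r}^+{}$ in the direction of increasing $\ell$ increases \emph{both} the action and the angle coordinate. Given $\ell<\ell'<r$, set $\theta=\theta(\ell)$ and $\theta'=\theta(\ell')$, so $\theta\le\theta'$ with both in $[0,\pi]$, and then
\[ \kappa(\ell',\theta')\;\le\;\kappa(\ell,\theta')\;\le\;\kappa(\ell,\theta), \]
where the first inequality is Lemma \ref{stretchmonotone} at fixed angle $\theta'$ and the second is Lemma \ref{unfoldmonotone} at fixed action $\ell$. Hence $\kappa$ is non-increasing along $\partial^\theta M_{4,r}^+{}$ as a function of $\ell$. (If one prefers a differential formulation, this is just $\tfrac{d}{d\ell}\kappa(\ell,\theta(\ell))=\partial_\ell\kappa+(\partial_\theta\kappa)\,\theta'(\ell)\le 0$, each summand being $\le0$.)

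I do not expect a genuine obstacle here: all the analytic content lives in the already-proved Lemmas \ref{unfoldmonotone} and \ref{stretchmonotone}, and the only things to verify are the elementary sign of $\theta'(\ell)$ along the level set $d(v_2,v_4)=r$ and the compatibility of the coordinate ranges with the hypotheses of those lemmas (in particular that $\theta(\ell)$ never leaves $[0,\pi]$, which holds automatically). The mild care needed is simply to phrase the composition correctly, since the natural parametrization of $\partial^\theta M_{4,r}^+{}$ couples $\ell$ and $\theta$.
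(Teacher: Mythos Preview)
Your proposal is correct and follows essentially the same route as the paper: both assertions are reduced to Lemmas \ref{unfoldmonotone} and \ref{stretchmonotone}, together with the observation that along $\partial^\theta M_{4,r}^+$ the angle coordinate $\theta(\ell)$ is non-decreasing in $\ell$. The only cosmetic difference is that the paper phrases this last observation by appealing to the monotonicity of $d(v_2,v_4)$ in the coordinates (Lemma~1 and the remark preceding Lemma~2), whereas you verify it directly from the explicit formula $\theta(\ell)=\arccos\bigl(1-\tfrac{r^2}{2-\ell^2/2}\bigr)$; both are equally valid.
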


\section{The derivative of the expected total curvature}

In order to calculate the derivative of the expected total curvature, we will use Crofton's differential equation. This formula was first derived by Crofton in 1885 but was only proven rigorously in later work of Baddeley \cite{AB}. For a good survey on the topic, we refer to the paper of Eisenberg and Sullivan \cite{ES}. In this section, we also use the notion of transport for probability measures. For a complete reference on this topic, we refer the reader to the first chapter of the book by Villani \cite{OTON}.

We define $\kappa_{4,r, \mu_B}$ as the expected total curvature when polygons are chosen from $\partial M_{4,r}^+$ with respect to the measure $\mu_B$. More precisely,
\[\kappa_{4,r, \mu_B} := \int_{P_n \in \partial M_{n,r}^+ } \kappa(P_n)\, d \mu_B. \]
We also define $\kappa_{4,r, \mu_I}$ to be the total expected curvature when the polygons are chosen from $\partial M_{4,r}^+$ with respect to the measure $\mu_I$:
\[\kappa_{4,r, \mu_I} := \int_{P_n \in \partial M_{n,r}^+ } \kappa(P_n)\, d \mu_I. \]

With this notation, Crofton's differential equation shows the following:
\[ \frac{ d \bar \kappa_{4,r}}{d r} = \frac{ \left(  \kappa_{4,r,\mu_B} - \bar \kappa_{4,r} \right)}{ \mu(M^+_{4,r})} \frac{d}{dr} \mu(M^+_{4,r}) \]


Combining Lemma 7 and the stochastic ordering lemmas (Lemmas \ref{stoch1} and \ref{stoch2}), this shows that $\kappa_{4,r, \mu_B} < \kappa_{4,r, \mu_I}$. To compare $ \kappa_{4,r, \mu_I} $ and $\bar \kappa_{4,r}$, note that there is a natural transport from $\mu$ and $\mu_I$, induced by $ \alpha (\pi_\ell)_\# \mu + (1- \alpha) (\pi_\theta)_\# \mu$. Lemmas \ref{unfoldmonotone} and \ref{stretchmonotone} imply that this transport decreases the total curvature, which implies $\kappa_{4,r, \mu_I} < \bar \kappa_{4,r}$.

 Combining the previous two inequalities, we find that $\kappa_{4,r,\mu_B} < \bar \kappa_{4,r} $. As such, the second term in the above differential equation is non-positive, so the expected total curvature is non-increasing. This proves the following.

\begin{theorem}
For $1 \leq r \leq \sqrt{2}$,
$\bar \kappa_{4,r}$ is non-increasing in $r$.
\end{theorem}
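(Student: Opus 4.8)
The plan is to assemble the result from the machinery developed in the preceding sections, with Crofton's differential equation as the engine. The starting point is the formula
\[ \frac{ d \bar \kappa_{4,r}}{d r} = \frac{ \left(  \kappa_{4,r,\mu_B} - \bar \kappa_{4,r} \right)}{ \mu(M^+_{4,r})} \frac{d}{dr} \mu(M^+_{4,r}), \]
which reduces everything to showing that $\kappa_{4,r,\mu_B} \leq \bar\kappa_{4,r}$, since $\mu(M^+_{4,r})$ is nondecreasing in $r$ (confinement constraints relax as $r$ grows) and $\mu(M^+_{4,r}) > 0$. So the real content is a chain of two comparisons: $\kappa_{4,r,\mu_B} \leq \kappa_{4,r,\mu_I}$ and $\kappa_{4,r,\mu_I} \leq \bar\kappa_{4,r}$.

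First I would establish $\kappa_{4,r,\mu_B} \leq \kappa_{4,r,\mu_I}$. On $\partial^\ell M_{4,r}^+$, Lemma~7 says total curvature is decreasing in $\theta$, while Lemma~\ref{stoch1} says $\mu_I$ is stochastically dominated by $\mu_B$ as a function of $\theta$; pairing a decreasing integrand with first-order stochastic dominance gives the inequality on this piece. On $\partial^\theta M_{4,r}^+$, Lemma~7 says curvature is non-increasing in $\ell$ and Lemma~\ref{stoch2} gives the analogous stochastic ordering in $\ell$, so the same pairing applies. Since $\mu_B$ and $\mu_I$ assign equal mass $\alpha$ to $\partial^\ell$ and equal mass $1-\alpha$ to $\partial^\theta$ (this is how $\mu_I$ was normalized), summing the two pieces yields $\kappa_{4,r,\mu_B} \leq \kappa_{4,r,\mu_I}$.

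Next I would show $\kappa_{4,r,\mu_I} \leq \bar\kappa_{4,r}$ by exhibiting an explicit transport from $\mu$ (normalized on $M_{4,r}^+$) to $\mu_I$ that does not increase total curvature pointwise. The transport is $\alpha\,(\pi_\ell)_\#\mu + (1-\alpha)\,(\pi_\theta)_\#\mu$: a polygon $P_4(\ell,\theta)$ is sent either to $\pi_\ell(P_4) = P_4(r,\theta)$, which increases the action coordinate from $\ell$ to $r$ with $\theta$ fixed, or to $\pi_\theta(P_4)$, which increases the angle coordinate with $\ell$ fixed. By Lemma~\ref{stretchmonotone} the first move does not increase $\kappa$, and by Lemma~\ref{unfoldmonotone} the second move does not increase $\kappa$ either (both projections move the coordinates in the direction of decreasing curvature, since $\theta \in [0,\pi]$ throughout). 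One must check that these pushforwards genuinely recover $\mu_I$ on each boundary piece — on $\partial^\ell$ this is immediate from the definition of $\mu_I$ as the marginal in $\theta$, and on $\partial^\theta$ it follows because $\pi_\theta$ collapses each vertical fiber $\{\ell = \mathrm{const}\}$ to the single boundary point at that $\ell$, matching the density $d\mu_I \propto \arccos(1 - r^2/(2-\ell^2/2))$. Then integrating the pointwise curvature inequality against the transport plan gives $\kappa_{4,r,\mu_I} \leq \bar\kappa_{4,r}$. Chaining the two comparisons gives $\kappa_{4,r,\mu_B} \leq \bar\kappa_{4,r}$, hence $d\bar\kappa_{4,r}/dr \leq 0$ on $(1,\sqrt 2)$.

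The main obstacle is the rigorous justification of Crofton's differential equation in this setting — verifying that $\bar\kappa_{4,r}$ is differentiable in $r$ and that the boundary term is correctly identified as the $\mu_B$-average of $\kappa$, which requires the regularity hypotheses of Baddeley's theorem \cite{AB} to hold for the region $M_{4,r}^+$ and the (bounded, continuous) curvature function. A secondary technical point is confirming that the projections $\pi_\ell,\pi_\theta$ really do transport $\mu$ to $\mu_I$ with the stated weights $\alpha$ and $1-\alpha$; this is essentially a Fubini/coarea bookkeeping argument using the product form $d\mu = d\ell\,d\theta$, but it must be done carefully so that the total masses match and the pointwise monotonicity lemmas can be integrated without loss. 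The curvature monotonicity (Lemmas~\ref{unfoldmonotone} and \ref{stretchmonotone}) and the stochastic orderings (Lemmas~\ref{stoch1}, \ref{stoch2}) are already in hand, so beyond these two verifications the argument is a direct assembly.
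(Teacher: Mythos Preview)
Your proposal is correct and follows essentially the same approach as the paper: Crofton's differential equation reduces the problem to $\kappa_{4,r,\mu_B} \leq \bar\kappa_{4,r}$, which is obtained via the intermediate quantity $\kappa_{4,r,\mu_I}$ using exactly the stochastic-ordering lemmas (Lemmas~\ref{stoch1}, \ref{stoch2}) together with Lemma~7 for the first inequality, and the transport $\alpha(\pi_\ell)_\#\mu + (1-\alpha)(\pi_\theta)_\#\mu$ together with Lemmas~\ref{unfoldmonotone} and \ref{stretchmonotone} for the second. The paper carries out precisely this argument, though it is slightly more terse about the bookkeeping points you flag (Baddeley's regularity hypotheses and the verification that the pushforwards match $\mu_I$).
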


\section{The proof for $r > \sqrt{2}$}

In this section, we prove that $\bar \kappa_{4,r}$ is also decreasing for $ \sqrt{2} < r < 2$. In this range, the diameter is exactly twice the radius of the polygon so this proves that the expected curvature is also decreasing as a function of the radius. It is worth noting that this approach can be adapted to work for $r< \sqrt{2}$ as well. 

\begin{center}
\includegraphics[width=90mm,scale=0.4]{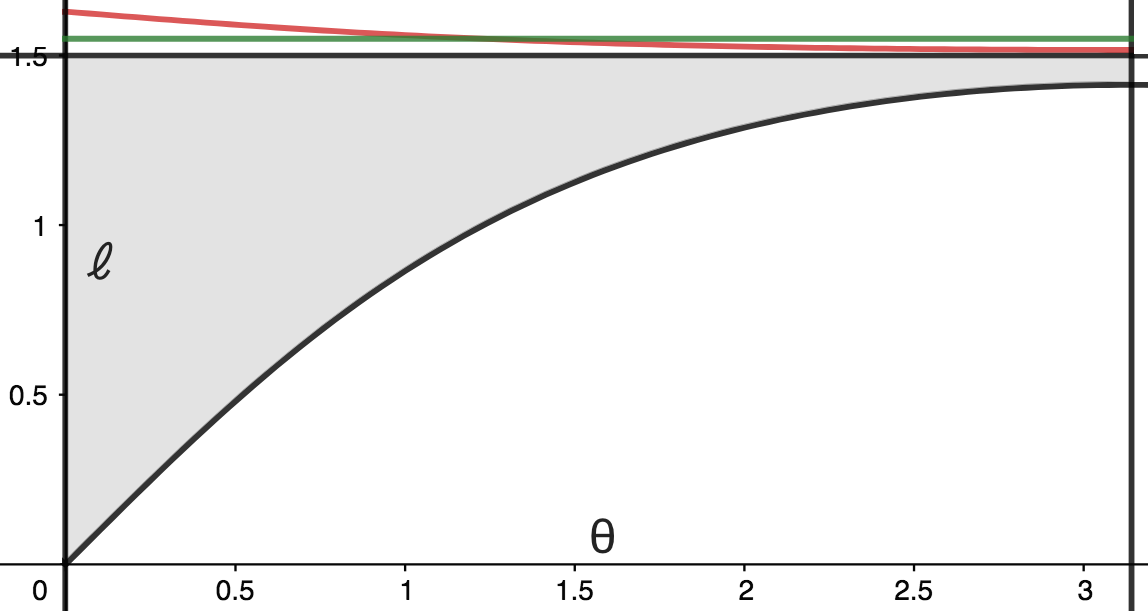}

Figure 2: $M_{4,r}^{+,\ell}$ and its boundary $\partial M_{4,r}^{+,\ell}$ for $r=1.5 $
\end{center}

We define the set $M_{4,r}^{+,\ell}$ to be
 \[ M_{4,r}^{+,\ell} := \left \{ P_n \in M_{4,r}^{+} ~|~ d(v_1,v_3) \geq d(v_2,v_4) \right \}. \] 
Explicitly, this is the subset of $M_{4,r}^{+}$ where $\theta< \arccos\left( \dfrac{4-3\ell^2}{4-\ell^2} \right)$  and is depicted in Figure 2 in $( \theta, \ell)$ coordinates. Given any polygon $P_4 \in M_{4,r}$, we can find an associated polygon in $M_{4,r}^{+,\ell}$ which is obtained from $P_4$ by a mirror reflection and a relabeling of the vertices. As such, the expected curvature on $M_{4,r}^{+,\ell}$ is the same as the expected curvature on $M_{4,r}^{+}$.

 We now define the natural boundary of $M_{4,r}^{+,\ell}$:
\[ \partial M_{4,r}^{+,\ell} := \lim_{\epsilon \to 0} M_{4,r+\epsilon}^{+, \ell} \backslash M_{4,r}^{+, \ell}. \]

 As before, we set $\nu_B$ to be the natural boundary measure on $\partial M_{4,r}^{+,\ell}$ and $\nu_I$ to be the marginal distribution of the uniform measure $\mu$ on $M_{4,r+\epsilon}^{+, \ell}$ in terms of $\theta$. The construction of analogous to $\mu_B$ and $\mu_I$ except that $\partial M_{4,r}^{+,\ell}$ consists of a single segment so there is no need to consider $\alpha$. In Figure 2, the height of the green curve corresponds the density $d \nu_B$, which is constant. The height of the red curve corresponds to the density $d \nu_I$. We can also define the associated projection $\pi: M_{4,r}^{+,\ell} \to \partial M_{4,r}^{+,\ell}$ which fixes the $\theta$ coordinate.
 
For fixed $\theta$, $d(v_2,v_4)$ is decreasing in $\ell$ and for fixed $\ell$, $d(v_2,v_4)$ is increasing in $\theta$. As such, $\nu_I$ is stochastically less than $\nu_B$. Furthermore, the transport  $\pi_\# \mu$ from $\mu$ to $\nu_I$ decreases the total curvature. This allows us to immediately repeat the previous argument involving Crofton's differential equation and prove Theorem 1, which we restate for convenience.
\begin{theorem}
For $1 \leq r \leq 2$,
$\bar \kappa_{4,r}$ is non-increasing in $r$.
\end{theorem}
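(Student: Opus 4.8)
The plan is to reduce the case $\sqrt{2} < r < 2$ to essentially the same Crofton argument used for $1 \le r \le \sqrt{2}$, but carried out on the modified domain $M_{4,r}^{+,\ell}$ rather than on $M_{4,r}^{+}$. The reason for switching domains is the obstruction already flagged in the excerpt: once $\ell > \sqrt{2}$, the distance $d(v_2,v_4)$ is at most $\sqrt 2$ regardless of $\theta$, so the constraint $d(v_2,v_4) < r$ becomes vacuous there and the marginal measure $\mu_I$ on $\partial M_{4,r}^+$ fails to be a probability measure. Restricting to the locus where $d(v_1,v_3) \ge d(v_2,v_4)$ (equivalently $\ell \le \sqrt{2}$, i.e. $\theta < \arccos\!\big(\tfrac{4-3\ell^2}{4-\ell^2}\big)$) kills that degeneracy: on $M_{4,r}^{+,\ell}$ the active boundary $\partial M_{4,r}^{+,\ell}$ is a single arc cut out by $d(v_2,v_4) = r$, so no decomposition into $\partial^\ell$ and $\partial^\theta$ pieces is needed and there is no weight $\alpha$ to track.

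First I would verify that the relabeling-plus-reflection correspondence identifies the expected total curvature on $M_{4,r}^{+,\ell}$ with that on $M_{4,r}^{+}$ (and hence on $M_{4,r}$); this is the analogue of the $M_4^+$ reduction and follows because every equilateral quadrilateral has some vertex pair realizing the larger of its two diagonals. Next I would set up $\nu_B$ (the normalized limit of uniform measures on $M_{4,r+\epsilon}^{+,\ell}\setminus M_{4,r}^{+,\ell}$) and $\nu_I$ (the $\theta$-marginal of $\mu$ restricted to $M_{4,r}^{+,\ell}$), exactly mirroring the construction of $\mu_B,\mu_I$. The boundary arc is parametrized by $\theta$, and along it $\ell$ is determined by $r$ through $d(v_2,v_4)=r$; computing $\partial \ell/\partial r$ shows $d\nu_B$ is constant in $\theta$ (as the excerpt notes, the green curve in Figure 2 has constant height), while $d\nu_I$ is proportional to the height of the shaded region, which by Lemmas~1 and~2 is a monotone function of $\theta$. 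Monotonicity of this density against the constant density $d\nu_B$ gives the stochastic ordering $\nu_I \preceq \nu_B$ in $\theta$.

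Then I would assemble the three comparisons. (i) Lemma~\ref{stretchmonotone} together with Lemma~\ref{unfoldmonotone} shows that moving a polygon $P_4(\ell,\theta)$ to $P_4$ on the boundary arc along $\pi$ (which fixes $\theta$ and increases $\ell$ from its interior value up to the boundary value) does not increase total curvature, so the transport $\pi_\#\mu$ from $\mu$ to $\nu_I$ decreases expected total curvature: $\kappa_{4,r,\nu_I} \le \bar\kappa_{4,r}$. (ii) By Lemma~7 the total curvature on the boundary arc is a non-increasing function of $\theta$ (equivalently $\ell$ along the arc), so the stochastic ordering $\nu_I \preceq \nu_B$ yields $\kappa_{4,r,\nu_B} \le \kappa_{4,r,\nu_I}$. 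Chaining (i) and (ii) gives $\kappa_{4,r,\nu_B} \le \bar\kappa_{4,r}$. Finally, Baddeley's version of Crofton's differential equation in the form
\[ \frac{d\bar\kappa_{4,r}}{dr} = \frac{\kappa_{4,r,\nu_B} - \bar\kappa_{4,r}}{\mu(M_{4,r}^{+,\ell})}\,\frac{d}{dr}\mu(M_{4,r}^{+,\ell}) \]
has a non-negative second factor ($\mu(M_{4,r}^{+,\ell})$ grows with $r$) and a non-positive first factor, so $\bar\kappa_{4,r}$ is non-increasing on $(\sqrt 2, 2)$; combined with the earlier theorem on $[1,\sqrt2]$ and continuity of $\bar\kappa_{4,r}$ at $r=\sqrt2$ this gives the claim on all of $[1,2]$.

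The main obstacle I expect is checking that Crofton's differential equation applies to the domain $M_{4,r}^{+,\ell}$, whose defining constraints now mix an inequality between two diagonals with the diameter bound: one must confirm that as $r$ varies only the arc $d(v_2,v_4)=r$ actually moves (the auxiliary face $d(v_1,v_3)=d(v_2,v_4)$ and the coordinate degeneracies $\ell=0$, $\theta=0$ are either $r$-independent or of the right codimension), so that $\nu_B$ genuinely captures the full rate of change of the measure. The curvature-monotonicity and stochastic-ordering inputs are already in hand from Lemmas~\ref{unfoldmonotone}, \ref{stretchmonotone}, and~7, so once the Crofton setup is justified on the new domain the argument is a direct transcription of the $r<\sqrt2$ case.
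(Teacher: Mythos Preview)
Your overall strategy is exactly the paper's: pass to $M_{4,r}^{+,\ell}$, observe that its moving boundary is a single arc, build $\nu_B,\nu_I$, get the stochastic ordering and the curvature-decreasing projection, and feed everything into Crofton. One point, however, is garbled and would derail the computation if carried out as written.

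On $M_{4,r}^{+,\ell}$ the defining inequality is $d(v_1,v_3)=\ell \ge d(v_2,v_4)$, so the diameter there equals $\ell$. Hence the diameter constraint is simply $\ell\le r$, and the active boundary $\partial M_{4,r}^{+,\ell}$ is the horizontal segment $\{\ell=r,\ \theta\in[0,\pi]\}$, \emph{not} the curve $d(v_2,v_4)=r$ as you state. Relatedly, your parenthetical ``equivalently $\ell\le\sqrt 2$'' is backwards: for $\ell>\sqrt 2$ one always has $d(v_2,v_4)<\sqrt 2<\ell$, so the locus $d(v_1,v_3)\ge d(v_2,v_4)$ \emph{contains} all of $\ell>\sqrt 2$ rather than excluding it. With the correct boundary, $\partial\ell/\partial r\equiv 1$ gives $d\nu_B$ constant (as you note from Figure~2), $d\nu_I$ at $\theta$ is proportional to the $\ell$-extent $r-\ell_{\min}(\theta)$, which is decreasing in $\theta$, and the projection $\pi$ fixes $\theta$ and raises $\ell$ to $r$, so Lemma~\ref{stretchmonotone} alone gives the curvature drop. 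After this correction your argument matches the paper line for line; the paper also remarks, as you do implicitly, that only Lemmas~\ref{unfoldmonotone} and~\ref{stretchmonotone} are needed here and the second stochastic-ordering lemma is not used.
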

 As before, this relies on the curvature monotonicity lemmas, but does not use the second lemma on stochastic ordering.

\section{Miscellaneous results}

For arbitrary $n$, it is possible to control the total curvature of $P_n$ when the diameter is either very large or very small. The following two lemmas can be obtained using straightforward estimates on angles between each edge, so we omit their proofs.
\begin{lemma}
If the diameter is 1, then the total curvature of an equilateral polygon with $n$ edges is at least $\frac{2 \pi}{3}n$.
\end{lemma}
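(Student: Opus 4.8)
The plan is to reduce the statement to a purely local estimate on the turning angle at each vertex, using only the diameter constraint applied to pairs of vertices that are two steps apart along the polygon. So fix an equilateral polygon $P_n$ with unit edges $\{e_i\}$ and vertices $\{v_i\}$, and assume its diameter is at most $1$; recall that this means $\|\sum_{i=j}^k e_i\| = \|v_{k+1} - v_j\| \leq 1$ for all admissible indices, so in particular every pair of vertices is at distance at most $1$.

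First I would look at three consecutive vertices $v_i, v_{i+1}, v_{i+2}$. The two sides $v_i v_{i+1}$ and $v_{i+1} v_{i+2}$ have length $1$ because $P_n$ is equilateral, while the diagonal $v_i v_{i+2}$ has length at most $1$ by the diameter bound. Writing $\beta_i := \angle v_i v_{i+1} v_{i+2}$ for the interior angle at $v_{i+1}$, the law of cosines gives $\|v_i - v_{i+2}\|^2 = 2 - 2\cos\beta_i \leq 1$, hence $\cos\beta_i \geq \tfrac{1}{2}$ and therefore $\beta_i \leq \pi/3$.

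Next I would translate this into a bound on the exterior (turning) angle that actually appears in the total curvature. Since $-e_i = v_i - v_{i+1}$ and $e_{i+1} = v_{i+2} - v_{i+1}$ both emanate from $v_{i+1}$ and the angle between them is exactly $\beta_i$, the angle between the edge vectors themselves is $\angle(e_i, e_{i+1}) = \pi - \beta_i \geq \pi - \pi/3 = \tfrac{2\pi}{3}$. Summing over the $n$ vertices of $P_n$ then yields $\sum_{i=1}^n \angle(e_i, e_{i+1}) \geq \tfrac{2\pi}{3} n$, which is the claim.

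There is essentially no obstacle here; the only points requiring a little care are (i) extracting the pairwise distance bound $\|v_i - v_{i+2}\| \leq 1$ from the definition of the diameter used in this paper, and (ii) keeping straight that the quantity summed in the total curvature is the turning angle $\pi - \beta_i$ rather than the interior angle $\beta_i$. The estimate is dimension-free, as it only invokes the law of cosines in the plane spanned by the three consecutive vertices.
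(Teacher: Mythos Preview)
Your argument is correct and is precisely the ``straightforward estimate on angles between each edge'' that the paper alludes to before omitting the proof: the diameter bound forces each second diagonal $\|v_i-v_{i+2}\|\le 1$, the law of cosines then pins the interior angle to at most $\pi/3$, and summing the resulting turning-angle bound $\angle(e_i,e_{i+1})\ge 2\pi/3$ gives the result. There is no substantive difference between your route and the paper's intended one.
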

\begin{lemma}
If $n$ is even and the diameter is greater than $n/2-\epsilon$ with $ \epsilon$ small, the total curvature of an equilateral polygon with $n$ edges is $2 \pi+ O(n \sqrt{\epsilon})$.
\end{lemma}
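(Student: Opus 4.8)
The plan is to show that a diameter close to the maximal possible value $n/2$ forces the polygon to be a small perturbation of the degenerate ``doubled segment'' — the configuration in which $n/2$ unit edges run along a direction $u$ and the other $n/2$ run back along $-u$, whose total curvature is exactly $2\pi$ — and then to control the size of the perturbation. To pin down the combinatorics, recall the diameter is $\max_{j\le k}\|\sum_{i=j}^{k}e_i\|$, the largest distance between two vertices; relabel the vertices so that it is attained by $v_1$ and $v_{k+1}$. The two arcs of the polygon joining $v_1$ to $v_{k+1}$ consist of $k$ and $n-k$ unit edges, so $d(v_1,v_{k+1})\le\min(k,n-k)\le n/2$. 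Since $\epsilon$ is small ($\epsilon<1$ suffices), the hypothesis $d(v_1,v_{k+1})>n/2-\epsilon$ forces $k=n/2=:m$, and then $\bigl\|\sum_{i=1}^{m}e_i\bigr\|=\bigl\|\sum_{i=m+1}^{n}e_i\bigr\|=d(v_1,v_{m+1})>m-\epsilon$.

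The key quantitative input is elementary: if $f_1,\dots,f_m$ are unit vectors with $s:=\bigl\|\sum_{i=1}^m f_i\bigr\|>m-\epsilon$ and $u:=s^{-1}\sum_{i=1}^m f_i$, then $\sum_{i=1}^{m}\bigl(1-\langle f_i,u\rangle\bigr)=m-s<\epsilon$, and since each summand is nonnegative we get $1-\langle f_i,u\rangle<\epsilon$, i.e.\ $\angle(f_i,u)<\arccos(1-\epsilon)$, for every $i$. Because $\arccos$ has a square-root branch point at $1$, one has $\arccos(1-\epsilon)\le\pi\sqrt{\epsilon/2}$ for all $\epsilon\in[0,2]$; this is exactly where the $\sqrt\epsilon$ of the statement comes from. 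Applying this both to $(e_1,\dots,e_m)$ with axis $u$ and to $(e_{m+1},\dots,e_n)$ with axis $-u$ — observing that $\sum_{i=m+1}^n e_i=-\sum_{i=1}^m e_i$ — we conclude that every edge of $P_n$ lies within angle $C\sqrt\epsilon$ of $\pm u$, with $C=\pi/\sqrt2$.

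It then remains to assemble the total curvature $\kappa=\sum_{i=1}^n\angle(e_i,e_{i+1})$. For each of the $n-2$ interior vertices of the two arcs, the turning angle obeys $\angle(e_i,e_{i+1})\le\angle(e_i,\pm u)+\angle(e_{i+1},\pm u)\le 2C\sqrt\epsilon$ by the spherical triangle inequality, so these contribute at most $2C(n-2)\sqrt\epsilon$ and at least $0$ in total. The two remaining turning angles, at $v_1$ and at $v_{m+1}$, are each the angle between a vector within $C\sqrt\epsilon$ of $u$ and a vector within $C\sqrt\epsilon$ of $-u$, hence lie between $\pi-2C\sqrt\epsilon$ and $\pi$; together they contribute $2\pi-O(\sqrt\epsilon)$. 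Summing, $\kappa$ lies in $\bigl[2\pi-4C\sqrt\epsilon,\ 2\pi+2C(n-2)\sqrt\epsilon\bigr]$, so $\kappa=2\pi+O(n\sqrt\epsilon)$, as claimed. (One may also invoke Fenchel's theorem for the lower bound $\kappa\ge 2\pi$, leaving only the upper bound to the argument above.)

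The whole argument is routine; the single point requiring care is the $\sqrt\epsilon$ bookkeeping — linear control on the inner products $1-\langle e_i,u\rangle$ yields only $\sqrt\epsilon$ control on the edge angles because of the $\arccos$ branch point — together with checking that $\epsilon$ is small enough that the combinatorial step genuinely pins down $k=n/2$. I do not anticipate any other obstacle.
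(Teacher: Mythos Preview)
Your argument is correct and complete. The paper omits the proof entirely, remarking only that it follows from ``straightforward estimates on angles between each edge''; your write-up is exactly such an argument, and the two nontrivial ingredients you isolate --- that $\epsilon<1$ forces the diameter to be realized by antipodal vertices $v_1,v_{m+1}$, and that the square-root loss comes from the branch point of $\arccos$ at $1$ --- are precisely the points one must check.
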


From the work in \cite{CS}, the expected total curvature is of an equilateral random polygon converges to $\frac{\pi}{2} n + \frac{\pi}{4}$ as $n$ gets large. The previous two inequalities give lower and upper bounds on $\kappa_{n,r, \mu B}$, respectively and in conjunction with Crofton's differential equation, these estimates show that $\bar \kappa_{n,r}$ is decreasing near $r=1$ for $n$ large and near $r=n/2$ for even $n$.

Furthermore, the final estimate can be applied to show a similar result for the knotting probability.  For $\epsilon$ sufficiently small, if an equilateral polygon has diameter at least $n/2-\epsilon$, then its total curvature is less than $4 \pi$. Appealing to the Fary-Milnor theorem, any such polygon must be unknotted. Therefore, the probability of knotting is decreasing when the confinement diameter is close to $n/2$.


\begin{thebibliography}{99}
\bibitem{AB} {\sc Baddeley, A. } (1977). Integrals on a moving manifold and geometrical probability. \emph{ Advances in Applied Probability}, 9(3), 588-603.

\bibitem{CGKS} {\sc Cantarella, J., Grosberg, A. Y., Kusner, R., \& Shonkwiler, C.} (2015). The expected total curvature of random polygons. \emph{ American Journal of Mathematics}, {\bf 137(2)}, 411-438.

\bibitem{CS} {\sc Cantarella, J., \& Shonkwiler, C. } (2016). The symplectic geometry of closed equilateral random walks in 3-space. \emph{ The Annals of Applied Probability},  {\bf 26(1)}, 549-596.

\bibitem{DEMZ} {\sc Diao, Y., Ernst, C., Montemayor, A., \& Ziegler, U. } (2013). Curvature of random walks and random polygons in confinement. \emph{ Journal of Physics A: Mathematical and Theoretical}, {\bf 46(28)}.

\bibitem{ES} {\sc Eisenberg, B., \& Sullivan, R. } (2000). Crofton's differential equation. \emph{ The American Mathematical Monthly}, {\bf 107(2)}, 129-139.

\bibitem{KM} {\sc Kapovich, M., \& Millson, J.} (1996). The symplectic geometry of polygons in Euclidean space. \emph{ J. Differential Geom, }  {\bf 44(3)}, 479-513.

\bibitem{MMOS} {\sc Micheletti, C., Marenduzzo, D., Orlandini, E., \& Summers, D. W.} (2006). Knotting of random ring polymers in confined spaces. \emph{ The Journal of Chemical Physics},  {\bf 124(6)}.

\bibitem{KCM} {\sc Millett, K. C.} (2012). Physical knot theory: an introduction to the study of the influence of knotting on the spatial characteristics of polymers. \emph{ Introductory Lectures on Knot Theory: Selected Lectures Presented at the Advanced School and Conference on Knot Theory and Its Applications to Physics and Biology} 346--378.

\bibitem{JM} {\sc Milnor, J. W.} (1950). On the total curvature of knots. \emph{ Annals of Mathematics} 248--257.

\bibitem{OTON} {\sc Villani, C. } (2008). Optimal transport: old and new (Vol. 338). Springer Science \& Business Media.

\bibitem{AW} {\sc Wright, A. } (2019). The moduli space of spatial polygons. Lecture notes. Available at \href{http://www-personal.umich.edu/~alexmw/polygons.pdf}{\tt http://www-personal.umich.edu/~alexmw/polygons.pdf}
\end{thebibliography}
\end{document}